\title{Presentations and Tietze transformations of C*-algebras}
\author{Will Grilliette}
\address{Division of Mathematics\\
Alfred University\\
109B Myers Hall\\
Alfred, NY 14802\\}
\email{w.b.grilliette@gmail.com}
\keywords{C*-algebra, Unitization, Adjoint Functor, Presentations, Tietze Transformation}
\subjclass{}
\newtheorem{thm}{Theorem}[section]
\newtheorem{cor}[thm]{Corollary}
\newtheorem{lem}[thm]{Lemma}
\theoremstyle{definition}
\newtheorem*{defn}{Definition}
\theoremstyle{remark}
\newtheorem{ex}[thm]{Example}
\DeclareMathOperator{\Ran}{ran}
\DeclareMathOperator{\CSAlg}{C^{*}Alg}
\DeclareMathOperator{\UCSAlg}{1C^{*}Alg}
\DeclareMathOperator{\unit}{Unit}
\newcommand{\alg}[1]{\mathcal{#1}}
\newcommand{\CSetC}{\mathbf{CSet}_{1}}
\newcommand{\CS}{\mathbf{C}^{*}}
\newcommand{\UCS}{\mathbf{1C}^{*}}
\newcommand{\Set}{\mathbf{Set}}
\begin{document}

\begin{abstract}
In this work, I develop a new view of presentation theory for C*-algebras, both unital and non-unital, heavily grounded in classical notions from algebra.  In particular, I introduce Tietze transformations for these presentations, which lead to a transformation theorem analogous to Tietze's 1908 result in group theory.
\end{abstract}

\maketitle
\tableofcontents

\section{Introduction}

Constructing universal C*-algebras by generators and relations is not a new idea, done previously in \cite{blackadar1985,gerbracht,loring1} among others.  This has classically been done by building a complex *-algebra, subject to certain *-algebraic relations, and then norming this structure according to a family of representations, subject to norm or positivity constraints.  However, this is counter to the usual algebraic means of building an object, which is to construct a free object and quotient it.

The primary philosophy of this paper is to reverse the order of the norming and quotienting processes, constructing a scaled-free C*-algebra like in \cite[\S3.2]{grilliette1} and quotienting by an ideal.  By proceeding in this way, more combinatorial and algebraic methods are unlocked, including the analogous calculus and transformation theorem to the classical work of \cite{tietze}.

The Tietze transformation theorem of \cite{tietze} is a classical result from combinatorial group theory.  In that work, any group can be represented by a presentation by generators and relations.  The transformational calculus allows the addition or removal of redundant generators and relations.  Moreover, two group presentations are isomorphic if and only if one presentation can be transformed into the other via this calculus.  Theorem \ref{tietze1} and Corollary \ref{tietze2} are the analogs for unital C*-algebras, and very similar proofs yield the same results for general C*-algebras.

This idea of C*-algebraic Tietze transformations was previously visited in \cite{gerbracht}, but only using *-algebraic conditions.  While the author of the current work does not know if a Tietze transformation theorem can be achieved with only *-algebraic conditions, he finds it unlikely.  In \cite{rordam1994}, it is known that the Cuntz algebra order 2, $\alg{O}_{2}$, satisfies $\alg{O}_{2}\otimes\alg{O}_{2}\cong\alg{O}_{2}$ as unital C*-algebras.  In \cite{ara2011}, the Leavitt path algebra order 2, $L_{2}$, is shown to be distinct from $L_{2}\otimes L_{2}$ as unital $k$-algebras.  Since the Cuntz algebra and Leavitt path algebra have identical *-algebraic presentations, this indicates that pure *-algebraic manipulation may be insufficient for C*-algebras.

Section \ref{construction} shows the construction of a C*-algebra presentation in this more algebraic sense for both the general and unital categories, summarizing major properties and features.  This relies on the category of normed sets with contractive functions from \cite[\S2.2]{grilliette1}.  Section \ref{transformations} develops the transformational calculus and theorem in analogy to \cite{tietze}.  Section \ref{examples} makes use of the transformations to compute examples, as well as the theorem to show failure of transformations to exist.

The author would like to thank the referees of this paper for their comments and patience in its revision.  He would also like to thank Prof.\ David Pitts for his advice and help in developing these ideas.  Thanks also go to Prof.\ Gene Abrams for the reference about the Leavitt path algebra.

\section{Building C*-algebras by generators and relations}\label{construction}

Here, a universal C*-algebra will be built by first constructing a scaled-free C*-algebra, analogous to the scaled-free Banach algebras of \cite[\S3.2]{grilliette1}.  Then, much like the classical algebraic model, a ``C*-relation'' will be an element of this structure, and the universal C*-algebra will be a quotient by an appropriate ideal.

\subsection{Scaled-free C*-algebras}

Recall the category of normed sets with contractive maps from \cite{grilliette1}, denoted $\CSetC$.  Likewise, let $\CS$ represent the category of C*-algebras with *-homomorphisms.  There is a natural forgetful functor $F_{\CS}^{\CSetC}:\CS\to\CSetC$ by dropping all of the algebraic properties, leaving only the norm functions and the contractivity of the maps.

For a normed set $(S,f)$, construction of a reflection along $F_{\CS}^{\CSetC}$ would proceed along natural lines.  One builds a free involutive $\mathbb{C}$-algebra on $S\setminus f^{-1}(0)$ and completes in an appropriate universal norm.  This is done previously in \cite[Satz 1.1.11]{gerbracht}, \cite[Proposition 2.2]{goodearl}, and \cite[Lemma 3.7]{loring2008}, so this work will only summarize the universal property of this object as in \cite{grilliette1}.  Let the C*-algebra built from $(S,f)$ in this manner be denoted by $\CSAlg(S,f)$ and $\theta_{S,f}:S\to\CSAlg(S,f)$ be the mapping of generators.

\begin{thm}[Reflection Characterization, $\CS$]\label{univprop2}
Given a C*-algebra $\alg{B}$ and a contractive map $\phi:(S,f)\to F_{\CS}^{\CSetC}\alg{B}$, there is a unique *-homomorphism $\hat{\phi}:\CSAlg(S,f)\to\alg{B}$ such that $F_{\CS}^{\CSetC}\hat{\phi}\circ\theta_{S,f}=\phi$.
\end{thm}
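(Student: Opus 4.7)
The plan is to follow the classical universal-algebra recipe in the C*-setting: build a free involutive $\mathbb{C}$-algebra on the generators, equip it with a universal C*-seminorm keyed to the normed-set data, quotient by the kernel, and complete. Since the construction itself appears in \cite{gerbracht}, \cite{goodearl}, and \cite{loring2008}, the body of the proof would consist of extracting $\hat{\phi}$ from the universal property of the free *-algebra together with the definition of the universal seminorm. Write $\alg{A}_{0}$ for the free involutive $\mathbb{C}$-algebra on $S\setminus f^{-1}(0)$, with generator map $\iota$, and set
\[
\|x\|:=\sup\bigl\{\|\pi(x)\|:\pi:\alg{A}_{0}\to\alg{C}\text{ a *-homomorphism with }\|\pi(\iota(s))\|\le f(s)\text{ for all }s\bigr\}.
\]
Then $\CSAlg(S,f)$ is the completion of the quotient of $\alg{A}_{0}$ by the null ideal of $\|\cdot\|$, and $\theta_{S,f}$ is the composite of (the zero-extension of) $\iota$ with the quotient-and-completion map.

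For existence of $\hat{\phi}$, contractivity of $\phi$ forces $\phi(s)=0$ whenever $f(s)=0$, so $\phi$ restricts to a set map $\phi_{0}:S\setminus f^{-1}(0)\to\alg{B}$ with $\|\phi_{0}(s)\|\le f(s)$. The universal property of the free *-algebra extends $\phi_{0}$ uniquely to a *-homomorphism $\tilde{\phi}:\alg{A}_{0}\to\alg{B}$; this $\tilde{\phi}$ is admissible in the supremum defining $\|\cdot\|$, so $\|\tilde{\phi}(x)\|\le\|x\|$ for every $x\in\alg{A}_{0}$. Hence $\tilde{\phi}$ descends to the seminorm quotient and extends continuously to the completion, giving the sought $\hat{\phi}:\CSAlg(S,f)\to\alg{B}$ with $F_{\CS}^{\CSetC}\hat{\phi}\circ\theta_{S,f}=\phi$. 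Uniqueness follows from density of the image of $\alg{A}_{0}$ in $\CSAlg(S,f)$ together with the fact that a *-homomorphism on $\alg{A}_{0}$ is pinned down by its values on the generators.

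The only step I would expect to require genuine care is well-definedness of the universal seminorm, specifically finiteness on every element of $\alg{A}_{0}$. The key estimate is inductive on word length: for any admissible $\pi$ and any monomial $\iota(s_{1})^{\varepsilon_{1}}\cdots\iota(s_{n})^{\varepsilon_{n}}$ with $\varepsilon_{i}\in\{1,\ast\}$, submultiplicativity of the norm on $\alg{C}$ together with $\|a^{\ast}\|=\|a\|$ yields a bound by $f(s_{1})\cdots f(s_{n})$, so the supremum is controlled by the corresponding polynomial in the weights and is in particular finite. Once finiteness is in hand, the C*-seminorm identity for $\|\cdot\|$ is inherited representation-by-representation, and the remainder of the argument is formal categorical bookkeeping.
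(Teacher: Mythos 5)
Your proposal is correct and follows exactly the construction the paper sketches (and defers to \cite{gerbracht}, \cite{goodearl}, and \cite{loring2008}): a free involutive $\mathbb{C}$-algebra on $S\setminus f^{-1}(0)$, a universal C*-seminorm bounded via the word-length estimate $f(s_{1})\cdots f(s_{n})$, and then quotient-and-completion, with $\hat{\phi}$ obtained by admissibility of the induced representation. The only point worth a remark is the standard set-theoretic one that the supremum should range over representations on a set (e.g.\ cyclic representations on Hilbert spaces of bounded cardinality) rather than over all C*-algebras, but this is routine and does not affect the argument.
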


Further, since $(S,f)$ was arbitrary, the following functorial result is obtained.

\begin{cor}[Left Adjoint Functor, $\CS$]
There is a unique functor $\CSAlg:\CSetC\to\CS$ defined on objects as above, which is left adjoint to $F_{\CS}^{\CSetC}$.
\end{cor}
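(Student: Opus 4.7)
The plan is to invoke the standard categorical principle that a functor $U:\mathcal{D}\to\mathcal{C}$ admits a left adjoint precisely when every object of $\mathcal{C}$ has a universal arrow to $U$; Theorem \ref{univprop2} provides exactly this data, with $\theta_{S,f}:(S,f)\to F_{\CS}^{\CSetC}\CSAlg(S,f)$ as the universal arrow at $(S,f)$. All that remains is to extract the functor and the adjunction from these universal arrows in the standard way.

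First I would extend $\CSAlg$ to morphisms. Given a contractive map $\psi:(S,f)\to(T,g)$ in $\CSetC$, the composite $\theta_{T,g}\circ\psi:(S,f)\to F_{\CS}^{\CSetC}\CSAlg(T,g)$ is itself contractive. Theorem \ref{univprop2} then supplies a unique *-homomorphism $\CSAlg(\psi):\CSAlg(S,f)\to\CSAlg(T,g)$ satisfying $F_{\CS}^{\CSetC}\CSAlg(\psi)\circ\theta_{S,f}=\theta_{T,g}\circ\psi$. Functoriality is then a quick diagram chase: both $\CSAlg(\mathrm{id}_{(S,f)})$ and $\mathrm{id}_{\CSAlg(S,f)}$ satisfy the universal equation defining the former, so they coincide by uniqueness; similarly, for composable $\psi,\chi$, both $\CSAlg(\chi\circ\psi)$ and $\CSAlg(\chi)\circ\CSAlg(\psi)$ fit the universal factorization of $\theta\circ(\chi\circ\psi)$, so they agree.

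Next I would exhibit the adjunction. The maps $\theta_{S,f}$ are natural in $(S,f)$ by the defining equation above, giving a natural transformation $\theta:\mathrm{Id}_{\CSetC}\Rightarrow F_{\CS}^{\CSetC}\CSAlg$ which will serve as the unit. The bijection
\[
\CS\bigl(\CSAlg(S,f),\alg{B}\bigr)\longrightarrow\CSetC\bigl((S,f),F_{\CS}^{\CSetC}\alg{B}\bigr),\qquad \Phi\mapsto F_{\CS}^{\CSetC}\Phi\circ\theta_{S,f},
\]
is exactly the existence-and-uniqueness statement of Theorem \ref{univprop2}, and its naturality in both variables is automatic from the definition of $\CSAlg$ on arrows. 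This is precisely the data of a left adjoint with unit $\theta$.

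Finally, uniqueness of the functor: if $G:\CSetC\to\CS$ agrees with $\CSAlg$ on objects and is left adjoint to $F_{\CS}^{\CSetC}$ with unit $\theta$, then for any morphism $\psi$ the *-homomorphism $G(\psi)$ must satisfy the same universal equation that defines $\CSAlg(\psi)$, and so $G(\psi)=\CSAlg(\psi)$ by the uniqueness clause of Theorem \ref{univprop2}. Since every step reduces to an application of the universal property, there is no real obstacle here; the main item to be careful about is keeping the bookkeeping tidy so that each appeal to uniqueness is made against the correct universal equation.
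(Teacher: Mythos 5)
Your argument is correct and is exactly the standard construction the paper implicitly relies on (the corollary is stated without proof as an immediate consequence of Theorem \ref{univprop2}); assembling the pointwise universal arrows $\theta_{S,f}$ into a functor and an adjunction is precisely the intended route. Your care in fixing the unit $\theta$ when arguing uniqueness is a sensible reading of ``defined on objects as above,'' since on-the-nose uniqueness of the morphism assignment does require that the reflection maps $\theta_{S,f}$ be regarded as part of the given data.
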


Similarly, one can build a scaled-free unital C*-algebra from $(S,f)$, though this can be done by appealing to the unitization.  To summarize, let $\UCS$ be the category of unital C*-algebras and unital *-homomorphisms and $F_{\UCS}^{\CS}:\UCS\to\CS$ the natural forgetful functor.  Given an arbitrary C*-algebra $\alg{A}$, the construction of the unitization is well-known in standard references such as \cite[Proposition I.1.3]{davidson}, where
\[
\unit(\alg{A}):=\alg{A}\oplus\mathbb{C}
\]
equipped with the appropriate multiplication, adjoint, and norm.  Letting $\iota_{\alg{A}}:\alg{A}\to\unit(\alg{A})$ be the inclusion map into the first summand in $\unit(\alg{A})$, the following is the universal property the unitization possesses in the notation of the current paper.

\begin{thm}[Reflection Characterization, Unitization]\label{unit-reflect}
Given a unital C*-algebra $\alg{B}$ and a *-homomorphism $\phi:\alg{A}\to F_{\UCS}^{\CS}\alg{B}$, there is a unique unital *-homomorphism $\hat{\phi}:\unit(\alg{A})\to\alg{B}$ such that $F_{\UCS}^{\CS}\hat{\phi}\circ\iota_{\alg{A}}=\phi$.
\end{thm}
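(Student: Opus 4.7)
The plan is to define $\hat{\phi}$ explicitly using the vector-space decomposition $\unit(\alg{A}) = \alg{A} \oplus \mathbb{C}$, setting $\hat{\phi}(a, \lambda) := \phi(a) + \lambda \cdot 1_{\alg{B}}$ for $(a,\lambda) \in \unit(\alg{A})$. This formula is in fact forced: any unital *-homomorphism $\psi:\unit(\alg{A}) \to \alg{B}$ satisfying $F_{\UCS}^{\CS}\psi \circ \iota_{\alg{A}} = \phi$ must send $(a,0) \mapsto \phi(a)$ by the commutation condition and $(0,1) \mapsto 1_{\alg{B}}$ by unitality, so $\mathbb{C}$-linearity forces $\psi = \hat{\phi}$. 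This same observation yields uniqueness immediately.

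For existence, I would verify the algebraic properties of $\hat{\phi}$ from the standard unitization formulas $(a,\lambda)(b,\mu) = (ab + \lambda b + \mu a,\, \lambda \mu)$ and $(a,\lambda)^{*} = (a^{*}, \overline{\lambda})$. Linearity is clear; multiplicativity of $\hat{\phi}$ is a direct expansion of
\[
\bigl(\phi(a) + \lambda \cdot 1_{\alg{B}}\bigr)\bigl(\phi(b) + \mu \cdot 1_{\alg{B}}\bigr),
\]
using that $\phi$ is multiplicative and that $1_{\alg{B}}$ is central, and *-preservation is the analogous routine calculation. Unitality is built in by $\hat{\phi}(0,1) = 1_{\alg{B}}$, and the intertwining condition reduces to the one-line check $\hat{\phi}(a,0) = \phi(a)$.

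The only point that requires a second thought is confirming that $\hat{\phi}$ is a legitimate morphism in $\UCS$, i.e., compatible with the C*-norm on $\unit(\alg{A})$ described in \cite[Proposition I.1.3]{davidson}. Because $\hat{\phi}$ is defined on the entire vector space $\alg{A} \oplus \mathbb{C}$ rather than on a dense subalgebra, no extension argument is needed; and once the algebraic properties above are in place, $\hat{\phi}$ is a *-homomorphism between C*-algebras and is therefore automatically contractive by the usual spectral-radius argument. So no explicit norm estimate is required, and I do not expect any real obstacle beyond bookkeeping the unitization structure.
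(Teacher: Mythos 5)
Your proposal is correct and is precisely the standard direct verification that the paper itself relies on by citing \cite[Proposition I.1.3]{davidson} rather than giving a proof: the formula $\hat{\phi}(a,\lambda)=\phi(a)+\lambda\cdot 1_{\alg{B}}$ is forced by the intertwining condition and unitality, and automatic contractivity of *-homomorphisms between C*-algebras disposes of the norm issue. No gaps.
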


That is, $\unit(\alg{A})$ is the smallest unitization of $\alg{A}$, provided $\alg{A}$ was not already unital.  Further, since $\alg{A}$ was arbitrary, the following functorial result is obtained.

\begin{cor}[Left Adjoint Functor, Unitization]
There is a unique functor $\unit:\CS\to\UCS$ defined on objects as above, which is left adjoint to $F_{\UCS}^{\CS}$.
\end{cor}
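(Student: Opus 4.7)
The plan is to bootstrap everything from the universal property in Theorem \ref{unit-reflect}. The content of the corollary is formal: once a class of objects is defined by a reflection property along a forgetful functor, the morphism assignment, functoriality, uniqueness, and adjointness are all forced.

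First I would define $\unit$ on arrows. Given a *-homomorphism $\psi:\alg{A}\to\alg{A}'$ in $\CS$, the composite $\iota_{\alg{A}'}\circ\psi:\alg{A}\to F_{\UCS}^{\CS}\unit(\alg{A}')$ is a *-homomorphism into the underlying C*-algebra of a unital C*-algebra. Applying Theorem \ref{unit-reflect} to this composite produces a \emph{unique} unital *-homomorphism $\unit(\psi):\unit(\alg{A})\to\unit(\alg{A}')$ satisfying $F_{\UCS}^{\CS}\unit(\psi)\circ\iota_{\alg{A}}=\iota_{\alg{A}'}\circ\psi$. I take this as the definition of $\unit(\psi)$.

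Next I would check functoriality. For the identity $1_{\alg{A}}$, the arrow $1_{\unit(\alg{A})}$ clearly satisfies the universal factorization $F_{\UCS}^{\CS}(1_{\unit(\alg{A})})\circ\iota_{\alg{A}}=\iota_{\alg{A}}=\iota_{\alg{A}}\circ 1_{\alg{A}}$, and so by the uniqueness clause of Theorem \ref{unit-reflect} it must equal $\unit(1_{\alg{A}})$. For composition $\psi'\circ\psi$, both $\unit(\psi'\circ\psi)$ and $\unit(\psi')\circ\unit(\psi)$ are unital *-homomorphisms fitting the defining equation of $\unit(\psi'\circ\psi)$, so uniqueness in Theorem \ref{unit-reflect} identifies them. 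Uniqueness of the functor as a whole follows by the same argument: any functor satisfying the object-level prescription must assign, to each $\psi$, a unital *-homomorphism fitting the same factorization, so it coincides with $\unit$ on arrows.

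Finally I would establish the adjunction. For fixed $\alg{A}\in\CS$ and $\alg{B}\in\UCS$, Theorem \ref{unit-reflect} says exactly that the map
\[
\Phi_{\alg{A},\alg{B}}:\UCS\bigl(\unit(\alg{A}),\alg{B}\bigr)\to\CS\bigl(\alg{A},F_{\UCS}^{\CS}\alg{B}\bigr), \qquad \Phi_{\alg{A},\alg{B}}(\rho):=F_{\UCS}^{\CS}\rho\circ\iota_{\alg{A}},
\]
is a bijection, with inverse $\phi\mapsto\hat{\phi}$. Naturality in $\alg{A}$ comes from unfolding $\Phi_{\alg{A}',\alg{B}}(\rho)\circ\psi=(F_{\UCS}^{\CS}\rho\circ\iota_{\alg{A}'})\circ\psi=F_{\UCS}^{\CS}\rho\circ F_{\UCS}^{\CS}\unit(\psi)\circ\iota_{\alg{A}}=\Phi_{\alg{A},\alg{B}}(\rho\circ\unit(\psi))$, using the defining equation of $\unit(\psi)$; naturality in $\alg{B}$ is immediate from functoriality of $F_{\UCS}^{\CS}$. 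There is no real obstacle here --- the only step that needs any care is the composition functoriality, and it is handled entirely by invoking the uniqueness half of Theorem \ref{unit-reflect} rather than computing anything.
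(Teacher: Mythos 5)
Your proposal is correct and is exactly the standard argument the paper implicitly invokes: the paper offers no explicit proof, simply noting that the functorial result "is obtained" because the reflection of Theorem \ref{unit-reflect} exists for every object $\alg{A}$, and your construction of $\unit$ on arrows, the uniqueness-based verification of functoriality, and the hom-set bijection with naturality are precisely how that standard fact is established. Nothing is missing and nothing differs in substance from what the paper intends.
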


If $F_{\UCS}^{\CSetC}:\UCS\to\CSetC$ is the forgetful functor, a quick check shows that $F_{\UCS}^{\CSetC}=F_{\CS}^{\CSetC}\circ F_{\UCS}^{\CS}$, so one can invoke the closure of right adjoints on composition.  Hence, its left adjoint is given by $\UCSAlg:=\unit\circ\CSAlg$.  Letting $\eta_{S,f}:=\iota_{\CSAlg(S,f)}\circ\theta_{S,f}$ be the embedding of the generation set, the universal property can be stated as follows.

\begin{thm}[Reflection Characterization, $\UCS$]\label{univprop}
Given a unital C*-algebra $\alg{B}$ and a contractive map $\phi:(S,f)\to F_{\UCS}^{\CSetC}\alg{B}$, there is a unique unital *-homomorphism $\hat{\phi}:\UCSAlg(S,f)\to\alg{B}$ such that $F_{\UCS}^{\CSetC}\hat{\phi}\circ\eta_{S,f}=\phi$.
\end{thm}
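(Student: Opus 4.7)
The plan is to stack the two universal properties already in hand (Theorem \ref{univprop2} for $\CSAlg$ and Theorem \ref{unit-reflect} for $\unit$), exploiting the factorization $F_{\UCS}^{\CSetC}=F_{\CS}^{\CSetC}\circ F_{\UCS}^{\CS}$ noted just before the statement. Since $\UCSAlg(S,f)=\unit(\CSAlg(S,f))$ and $\eta_{S,f}=\iota_{\CSAlg(S,f)}\circ\theta_{S,f}$ by construction, the reflection property for $\UCSAlg$ should be a formal consequence of applying the two existing reflection properties in sequence.

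First, I would reinterpret $\phi\colon(S,f)\to F_{\UCS}^{\CSetC}\alg{B}$ as a contractive map $(S,f)\to F_{\CS}^{\CSetC}(F_{\UCS}^{\CS}\alg{B})$ and invoke Theorem \ref{univprop2} to obtain a unique *-homomorphism $\tilde{\phi}\colon\CSAlg(S,f)\to F_{\UCS}^{\CS}\alg{B}$ satisfying $F_{\CS}^{\CSetC}\tilde{\phi}\circ\theta_{S,f}=\phi$. Next, I would apply Theorem \ref{unit-reflect} with $\alg{A}=\CSAlg(S,f)$ to $\tilde{\phi}$, yielding a unique unital *-homomorphism $\hat{\phi}\colon\UCSAlg(S,f)\to\alg{B}$ with $F_{\UCS}^{\CS}\hat{\phi}\circ\iota_{\CSAlg(S,f)}=\tilde{\phi}$. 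A direct computation then chases the equality
\[
F_{\UCS}^{\CSetC}\hat{\phi}\circ\eta_{S,f}=F_{\CS}^{\CSetC}F_{\UCS}^{\CS}\hat{\phi}\circ\iota_{\CSAlg(S,f)}\circ\theta_{S,f}=F_{\CS}^{\CSetC}\tilde{\phi}\circ\theta_{S,f}=\phi,
\]
establishing existence.

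For uniqueness, suppose $\hat{\psi}\colon\UCSAlg(S,f)\to\alg{B}$ is another unital *-homomorphism with $F_{\UCS}^{\CSetC}\hat{\psi}\circ\eta_{S,f}=\phi$. Then $F_{\UCS}^{\CS}\hat{\psi}\circ\iota_{\CSAlg(S,f)}$ is a *-homomorphism $\CSAlg(S,f)\to F_{\UCS}^{\CS}\alg{B}$ whose precomposition with $\theta_{S,f}$ is $\phi$, so by the uniqueness clause of Theorem \ref{univprop2} it coincides with $\tilde{\phi}$. The uniqueness clause of Theorem \ref{unit-reflect} then forces $\hat{\psi}=\hat{\phi}$.

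There is essentially no obstacle here beyond careful bookkeeping of the forgetful functors; the argument is the standard observation that a composition of left adjoints is left adjoint to the composition of the corresponding right adjoints, instantiated at the object $(S,f)$. The only place requiring any vigilance is tracking the application of $F_{\CS}^{\CSetC}$ when passing between the two universal properties, so that the hypothesis on $\phi$ is read in the correct category at each step.
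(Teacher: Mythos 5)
Your proof is correct and takes essentially the same route as the paper: the paper does not write out a proof at all, but simply invokes the closure of right adjoints under composition via $F_{\UCS}^{\CSetC}=F_{\CS}^{\CSetC}\circ F_{\UCS}^{\CS}$, and your argument is precisely the explicit, object-level unpacking of that composite adjunction using Theorems \ref{univprop2} and \ref{unit-reflect} in sequence. The existence chase and the two-stage uniqueness argument are both sound.
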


Notice that $\CSAlg(S,f)$ is naturally identified as the codimension-1 ideal of $\UCSAlg(S,f)$ generated by $\eta_{S,f}(S)$.

\subsection{Presentations of C*-algebras}

Given any unital C*-algebra $\alg{A}$, notice that letting $(S,f):=F_{\UCS}^{\CSetC}\alg{A}$ and $\phi=id_{\alg{A}}$ in Theorem \ref{univprop} determines $\alg{A}$ as a quotient C*-algebra of $\UCSAlg(S,f)$.  Thus, the following definitions can be made, analogous to those found in pure algebra.

\begin{defn}
For a normed set $(S,f)$, a \emph{C*-relation} on $(S,f)$ is an element $r\in\UCSAlg(S,f)$.  A C*-relation $r$ on $(S,f)$ is \emph{non-unital} if $r\in\Ran\left(\iota_{\CSAlg(S,f)}\right)$.  Otherwise, $r$ is \emph{unital}.  An element of $\eta_{S,f}(S)$ itself is a \emph{generator}.
\end{defn}

\begin{defn}
For a normed set $(S,f)$ and C*-relations $R\subseteq\UCSAlg(S,f)$ on $(S,f)$, let $J_{R}$ be the two-sided, norm-closed ideal generated by $R$ in $\UCSAlg(S,f)$.  Then, the \emph{unital C*-algebra presented on $(S,f)$ subject to $R$} is
\[
\left\langle S,f|R\right\rangle_{\UCS}:=\UCSAlg(S,f)/J_{R},
\]
the quotient C*-algebra of $\UCSAlg(S,f)$ by $J_{R}$.
\end{defn}

The usual conventions for algebraic presentation theories are taken.  This includes blurring the distinction between $s\in S$ and $\left[\eta_{S,f}(s)\right]\in\langle S,f|R\rangle_{\UCS}$, writing relations equationally, or replacing relations with equivalent conditions.  For a finite set $S:=\left\{s_{1},\ldots,s_{n}\right\}$, then the normed set $(S,f)$ can be written as
\[
(S,f)=\left\{\left(s_{1},\lambda_{1}\right),\ldots,\left(s_{n},\lambda_{n}\right)\right\},
\]
where $\lambda_{j}:=f\left(s_{j}\right)$.  As such, the presentation of a unital C*-algebra can be written as
\[
\langle S,f|R\rangle_{\UCS}=:\left\langle\left(s_{1},\lambda_{1}\right),\ldots,\left(s_{n},\lambda_{n}\right)|R\right\rangle_{\UCS},
\]
directly associating a generator with its norm value from $f$.

This presentation notation is not far removed from the traditional notation of \cite{blackadar1985,loring2008}, except that the norm function $f$ from the original normed set is explicitly stated and kept.  This seems superficial, but it garners some pleasant consequences.

First, as $f$ imposes norm bounds on each of the generators, any formal presentation is guaranteed to yield a unital C*-algebra.  Likewise, any unital C*-algebra has a presentation of this form.  Philosophically, this emphasizes that the norm of a generator is part of the generator, rather than a condition to be imposed.

Second, a C*-relation as defined above includes unital *-polynomials in the elements of $S$ as described in \cite{blackadar1985,gerbracht}, but also elements created via the functional calculi.  This encodes many well-known functional analytic conditions as elements of a scaled-free unital C*-algebra, which can then be forced by quotienting.

\begin{ex}\label{relations}
\begin{enumerate}
Fix a unital C*-algebra $\alg{A}$ and $a\in\alg{A}$.
\item(Positivity and order) Let $p:\mathbb{R}\to\mathbb{R}$ by
\[
p(t):=\left\{\begin{array}{cc}
t,	&	t\geq 0,\\
0,	&	t<0.
\end{array}\right..
\]
Recall that $a\geq 0$ if and only if $a=p\left(\Re(a)\right)$, which is an application of the continuous functional calculus.
\item(Norm bounds on relations) For $\lambda\geq 0$, $\|a\|\leq\lambda$ if and only if $\left(a^{*}a\right)^{2}\leq\lambda^{2}a^{*}a$, which appeals to the above result.
\item(Invertibility) For $\lambda\geq 0$, there is $b\in\alg{A}$ satisfying $\|b\|\leq\lambda$ and $ba=\mathbb{1}$ if and only if $\mathbb{1}\leq\lambda^{2}a^{*}a$.  Moreover, $b\in C^{*}(\mathbb{1},a)$, appealing again to order.
\item(Analytic relations) The elements $\exp(a)$, $\sin(a)$, and $\cos(a)$ exist in any C*-algebra by the analytic functional calculus.
\end{enumerate}
\end{ex}

This fact has been explored previously in \cite[\S6]{hadwin2003} and \cite{loring2008} with different perspectives.  In \cite{hadwin2003}, a C*-algebraic relation corresponds to the zero-set of a non-commutative continuous function, while \cite{loring2008} considers a C*-algebraic relation as a full subcategory of a comma category between $\Set$ and $\CS$.

Lastly, while the notions of \cite{hadwin2003} and \cite{loring2008} are equivalent to the present work, the algebraic perspective taken here has the advantage of manipulating elements.  This will yield a Tietze calculus much like the classical result for group theory from \cite{tietze}.

As the presentation above is built from universal constructions, it satisfies a particular universal property, which follows directly from the constructions' universal properties.

\begin{thm}[Universal Property of a $\UCS$-Presentation]\label{univpresent}
Let $R$ be C*-relations on $(S,f)$ and $\alg{B}$ a unital C*-algebra.  Let $\phi:(S,f)\to F_{\UCS}^{\CSetC}\alg{B}$ be a contraction and $\hat{\phi}:\langle S,f|\emptyset\rangle_{\UCS}\to\alg{B}$ the unital *-homomorphism guaranteed by Theorem \ref{univprop}.  If $R\subseteq\ker\left(\hat{\phi}\right)$, then there is a unique unital *-homomorphism $\tilde{\phi}:\langle S,f|R\rangle_{\UCS}\to\alg{B}$ such that $\tilde{\phi}(s)=\phi(s)$.
\end{thm}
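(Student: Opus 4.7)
The plan is to reduce the statement to the universal property of the C*-algebra quotient. First I would observe that, since $J_{\emptyset} = \{0\}$, the algebra $\langle S,f|\emptyset\rangle_{\UCS}$ is canonically identified with $\UCSAlg(S,f)$, so $\hat{\phi}$ is really the *-homomorphism produced by Theorem \ref{univprop} acting on the scaled-free unital C*-algebra. Let $q:\UCSAlg(S,f)\to\UCSAlg(S,f)/J_{R}=\langle S,f|R\rangle_{\UCS}$ denote the quotient map.

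The key algebraic step is to show that $\hat{\phi}$ annihilates $J_{R}$. Since $\hat{\phi}$ is a *-homomorphism between C*-algebras, $\ker(\hat{\phi})$ is automatically a two-sided, norm-closed ideal in $\UCSAlg(S,f)$. By hypothesis $R\subseteq\ker(\hat{\phi})$, and $J_{R}$ is by definition the smallest two-sided, norm-closed ideal containing $R$; hence $J_{R}\subseteq\ker(\hat{\phi})$. The universal property of the C*-algebra quotient then yields a unique unital *-homomorphism $\tilde{\phi}:\langle S,f|R\rangle_{\UCS}\to\alg{B}$ with $\tilde{\phi}\circ q=\hat{\phi}$.

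It remains to check that $\tilde{\phi}$ agrees with $\phi$ on the generators and that it is unique with this property. For agreement, compute $\tilde{\phi}(s)=\tilde{\phi}\left(q\left(\eta_{S,f}(s)\right)\right)=\hat{\phi}\left(\eta_{S,f}(s)\right)=\phi(s)$ using Theorem \ref{univprop}, under the convention that $s$ denotes its image $[\eta_{S,f}(s)]$ in the presentation. For uniqueness, suppose $\psi$ is another unital *-homomorphism with $\psi(s)=\phi(s)$ for all $s\in S$; then $\psi\circ q:\UCSAlg(S,f)\to\alg{B}$ is a unital *-homomorphism that extends $\phi$ along $\eta_{S,f}$, so by the uniqueness clause of Theorem \ref{univprop} one has $\psi\circ q=\hat{\phi}=\tilde{\phi}\circ q$, and surjectivity of $q$ forces $\psi=\tilde{\phi}$.

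There is no serious obstacle here; the entire argument is a formal diagram chase once the two standard inputs are identified, namely that kernels of *-homomorphisms of C*-algebras are closed two-sided ideals and that C*-algebra quotients enjoy the expected factorization property. The one point that warrants care is the notational convention that conflates $s$ with $[\eta_{S,f}(s)]$, since without it the statement $\tilde{\phi}(s)=\phi(s)$ would need to be phrased more pedantically as $\tilde{\phi}\circ q\circ\eta_{S,f}=\phi$.
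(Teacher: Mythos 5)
Your proof is correct and is precisely the argument the paper intends: the paper gives no written proof beyond the remark that the result ``follows directly from the constructions' universal properties,'' and your chain --- $\ker(\hat{\phi})$ is a norm-closed two-sided ideal, hence contains $J_{R}$, hence $\hat{\phi}$ factors through the quotient, with uniqueness forced by Theorem \ref{univprop} and surjectivity of $q$ --- is exactly that direct derivation. No gaps.
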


That is, if the C*-relations $R$ are satisfied by the image of $S$ under the contractive map $\phi$, there is a unique unital *-homomorphism from the presented unital C*-algebra to $\alg{B}$ extending $\phi$.

For general C*-algebras, an analogous presentation can be defined with similar conventions and properties.  Since $\CSAlg(S,f)$ embeds into $\UCSAlg(S,f)$, the term ``non-unital C*-relation'' will be used interchangeably for an element of $\CSAlg(S,f)$ or its image in $\UCSAlg(S,f)$.  The unitization also gives a nice formal connection between the two types of presentations.

To state this clearly, fix non-unital C*-relations $R$ on a normed set $(S,f)$.  Let $\alg{F}:=\langle S,f|\emptyset\rangle_{\CS}$ and $\iota_{\alg{F}}:\langle S,f|\emptyset\rangle_{\CS}\to\langle S,f|\emptyset\rangle_{\UCS}$ the unitization embedding.

\begin{thm}[Unitization of a $\CS$-Presentation]\label{unit-present}
Given a normed set $(S,f)$ and non-unital C*-relations $R$ on $(S,f)$,
\[
\unit\left(\langle S,f|R\rangle_{\CS}\right)\cong_{\UCS}\left\langle S,f\left|\iota_{\alg{F}}(R)\right.\right\rangle_{\UCS}.
\]
Furthermore, $\langle S,f|R\rangle_{\CS}$ is $\CS$-isomorphic to the ideal generated by $S$ in $\left\langle S,f\left|\iota_{\alg{F}}(R)\right.\right\rangle_{\UCS}$.
\end{thm}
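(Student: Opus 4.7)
The plan is to compare both sides directly inside the scaled-free unital algebra $\UCSAlg(S,f)$: I will show that the closed two-sided ideal $J^{\UCS}$ generated by $\iota_{\alg{F}}(R)$ in $\UCSAlg(S,f)$ coincides with the image under $\iota_{\alg{F}}$ of the closed two-sided ideal $J^{\CS}$ generated by $R$ in $\CSAlg(S,f)$. Granting this, the quotient $\UCSAlg(S,f)/J^{\UCS}$ splits as the $\CS$-quotient $\CSAlg(S,f)/J^{\CS}$ with an adjoined scalar unit, which is exactly $\unit(\langle S,f|R\rangle_{\CS})$ by Theorem~\ref{unit-reflect}.

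For the ideal comparison, I would first record two preliminary observations. By the final remark of Section~2.1, $\iota_{\alg{F}}(\CSAlg(S,f))$ is a codimension-$1$ closed two-sided ideal of $\UCSAlg(S,f)$; and any closed two-sided ideal of $\CSAlg(S,f)$ is automatically a closed two-sided ideal of $\UCSAlg(S,f)$, because $\UCSAlg(S,f)=\iota_{\alg{F}}(\CSAlg(S,f))\oplus\mathbb{C}\mathbb{1}$ and the adjoined unit acts as the identity. The containment $\iota_{\alg{F}}(J^{\CS})\subseteq J^{\UCS}$ is immediate from $\iota_{\alg{F}}(R)\subseteq J^{\UCS}$ together with the second observation. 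For the reverse containment, every element of the algebraic ideal generated by $\iota_{\alg{F}}(R)$ in $\UCSAlg(S,f)$ has the form $\sum_i(a_i+\lambda_i\mathbb{1})\iota_{\alg{F}}(r_i)(b_i+\mu_i\mathbb{1})$ with $a_i,b_i\in\iota_{\alg{F}}(\CSAlg(S,f))$, $\lambda_i,\mu_i\in\mathbb{C}$, and $r_i\in R$; expanding and using that $\mathbb{1}$ acts as identity on $\iota_{\alg{F}}(\CSAlg(S,f))$, each summand lies in $\iota_{\alg{F}}(J^{\CS})$ algebraically. Taking norm closures and using that $\iota_{\alg{F}}(\CSAlg(S,f))$ is itself norm-closed gives $J^{\UCS}\subseteq\iota_{\alg{F}}(J^{\CS})$.

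With $J^{\UCS}=\iota_{\alg{F}}(J^{\CS})$ established, the quotient decomposes as
\[
\langle S,f|\iota_{\alg{F}}(R)\rangle_{\UCS}=\UCSAlg(S,f)/J^{\UCS}\cong\bigl(\iota_{\alg{F}}(\CSAlg(S,f))/\iota_{\alg{F}}(J^{\CS})\bigr)\oplus\mathbb{C}\mathbb{1},
\]
where the left summand is $\CS$-isomorphic to $\langle S,f|R\rangle_{\CS}$ as a codimension-$1$ closed ideal and the right summand is the adjoined scalar unit. By Theorem~\ref{unit-reflect}, this is a $\UCS$-isomorphic copy of $\unit(\langle S,f|R\rangle_{\CS})$, giving the first assertion. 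For the second assertion, the ideal generated by the image of $S$ in the quotient is the image of the ideal generated by $\eta_{S,f}(S)$ in $\UCSAlg(S,f)$; by the final remark of Section~2.1, the latter is $\iota_{\alg{F}}(\CSAlg(S,f))$, whose image is precisely the left summand above.

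The main obstacle is the closure step in the second paragraph: one must verify that the norm closure of the algebraic ideal generated by $\iota_{\alg{F}}(R)$ does not escape $\iota_{\alg{F}}(\CSAlg(S,f))$. This is the point at which the structural fact that $\iota_{\alg{F}}(\CSAlg(S,f))$ is itself a closed ideal of $\UCSAlg(S,f)$, and not merely an embedded subalgebra, is essential; without it the two ideals could differ and the splitting would fail.
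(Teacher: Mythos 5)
Your argument is correct, but it takes a genuinely different route from the paper's. The paper disposes of this theorem in one line: one maps generators and plays the universal properties of $\unit$ (Theorem \ref{unit-reflect}) and of the two presentations against each other to produce mutually inverse unital *-homomorphisms. You instead work internally in $\UCSAlg(S,f)$, identifying the closed ideal $J^{\UCS}$ generated by $\iota_{\alg{F}}(R)$ with $\iota_{\alg{F}}(J^{\CS})$ and reading the splitting off the decomposition $\UCSAlg(S,f)=\iota_{\alg{F}}\left(\CSAlg(S,f)\right)\oplus\mathbb{C}\mathbb{1}$. What your approach buys is an explicit description of the kernel and a transparent proof of the ``furthermore'' clause, since the codimension-$1$ ideal is exhibited rather than inferred; what the paper's approach buys is brevity and no need to compute ideals at all. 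Two small points of bookkeeping in your write-up: your ``second observation'' (that a closed ideal of $\iota_{\alg{F}}\left(\CSAlg(S,f)\right)$ is a closed ideal of $\UCSAlg(S,f)$) in fact yields the containment $J^{\UCS}\subseteq\iota_{\alg{F}}\left(J^{\CS}\right)$ immediately by minimality of $J^{\UCS}$, so the element-wise expansion in your second paragraph is redundant; conversely, the containment you call ``immediate,'' $\iota_{\alg{F}}\left(J^{\CS}\right)\subseteq J^{\UCS}$, is the one that needs a (still easy) argument, namely that $J^{\UCS}\cap\iota_{\alg{F}}\left(\CSAlg(S,f)\right)$ is a closed ideal of $\iota_{\alg{F}}\left(\CSAlg(S,f)\right)$ containing $\iota_{\alg{F}}(R)$. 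Both containments hold, so this is a matter of attribution, not a gap. Finally, when matching the quotient with $\unit\left(\langle S,f|R\rangle_{\CS}\right)$ it is slightly cleaner to check that the quotient satisfies the universal property of Theorem \ref{unit-reflect} directly, though your identification of the *-algebra structure, combined with the automatic isometry of *-isomorphisms between C*-algebras, also suffices.
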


The proof follows by mapping generators and using the universal properties of $\unit$ and the two presentations.

Lastly, if one restricts to *-algebraic relations, these $\CS$- and $\UCS$-presentations agree to those considered in \cite{blackadar1985,gerbracht,loring1} and others, where a *-algebra is created and then normed by a class of representations.  The proof is much like that of Theorem \ref{unit-present}, associating generators and using the universal properties.

\section{Formal manipulation of presentations}\label{transformations}

This section is dedicated to creating a formal calculus for presentations of C*-algebras and transformation theorem, analogous to the well-known group theoretic results of \cite{tietze}.

Tietze transformations for C*-algebras were considered previously in \cite[\S2.4.1]{gerbracht}, using only *-algebraic relations.  In the current work, Tietze transformations will be performed with C*-relations as defined in the previous section, which will not only yield a formal calculus, but also a transformation theorem.

The proofs of this section will be done for unital C*-algebra presentations, but the same results for general C*-algebra presentations hold by directly analogy.

\subsection{Tietze transformations for C*-algebras}

For this discussion, fix C*-relations $R$ on a normed set $(S,f)$, and define $\alg{F}:=\langle S,f|\emptyset\rangle_{\UCS}$.

The first Tietze transformation is the addition or removal of redundant C*-relations.  A set of C*-relations $Q\subseteq\alg{F}$ are \emph{redundant} for $R$ if $Q\subseteq J_{R}$, where $J_{R}$ is ideal generated by $R$ in $\alg{F}$.  That is, the conditions within $Q$ are implied by those within $R$.  Hence,
\[
\langle S,f|R\rangle_{\UCS}:=\alg{F}/J_{R}
=\alg{F}/J_{R\cup Q}
=:\langle S,f|R\cup Q\rangle_{\UCS},
\]
where $J_{R\cup Q}$ is the ideal generated by $R\cup Q$.

The second Tietze transformation is the addition or removal of redundant generators, i.e. one that can be written in terms of the others.  To clearly state this, let $G\subseteq\alg{F}$ and associate a new symbol $s_{g}$ and a nonnegative value $\lambda_{g}\in\left[\|g\|_{\alg{F}},\infty\right)$ to each $g\in G$.  Define $S_{G}:=\left\{s_{g}:g\in G\right\}$ and $f_{G}:S_{G}\to[0,\infty)$ by $f_{G}\left(s_{g}\right):=\lambda_{g}$, creating a new normed set $\left(S_{G},f_{G}\right)$.  Let
\[
\left(S_{1},f_{1}\right):=(S,f){\coprod}^{\CSetC}\left(S_{G},f_{G}\right),
\]
the disjoint union normed set and $\rho:(S,f)\to\left(S_{1},f_{1}\right)$ the canonical inclusion.  Applying $\UCSAlg$, define $\alg{F}_{1}:=\left\langle S_{1},f_{1}|\emptyset\right\rangle_{\UCS}$ and $\hat{\rho}:=\UCSAlg(\rho)$, which embeds $\alg{F}$ into $\alg{F}_{1}$ by association of generators.  Letting
\[
R_{1}:=\hat{\rho}(R)\cup\left\{s_{g}-\hat{\rho}(g):g\in G\right\},
\]
the mapping of generators from $\hat{\rho}$ can be lifted using Theorem \ref{univpresent} to give
\[
\langle S,f|R\rangle_{\UCS}\cong_{\UCS}\left\langle S_{1},f_{1}|R_{1}\right\rangle_{\UCS}.
\]

According to the philosophy of this paper, each new generator is added with a new corresponding norm bound.  This is necessary to avoid the behavior of the following example.

\begin{ex}
Notice that
\[
\left\langle\left. (x,1),\left(y,\frac{1}{4}\right)\right|x=x^{2},y=x^{*}x\right\rangle_{\UCS}\cong_{\UCS}\mathbb{C}
\]
and
\[
\left\langle(x,1)\left|x=x^{2}\right.\right\rangle_{\UCS}\cong_{\UCS}\mathbb{C}^{2}.
\]
\end{ex}

Thus, the norms of generators must be taken into consideration before removal.  Admittedly, the norms of the generators in this example are easily computed, but this may not so evident in general, particularly when applying the functional calculi.

\subsection{The free product}

The unital free product C*-algebra will be used briefly in the proof of the main theorem, and it will be useful in examples.  The following identification was previously considered for non-unital *-polynomial relations in \cite[Satz 3.3.2]{gerbracht}, and the current work extends the characterization to general C*-relations.

Within \cite[\S1.4]{freerandom}, the unital free product is shown to be the coproduct in $\UCS$, satisfying the appropriate mapping property.  As such, the notation ``${\coprod}^{\UCS}$'' will be used to denote the unital free product for an arbitrary index set.  When the index set is finite, the more common ``$*_{\mathbb{C}}$'' will be used.

To state this characterization clearly, fix an index set $\Gamma$.  For each $\gamma\in\Gamma$, fix C*-relations $R_{\gamma}$ on a normed set $\left(S_{\gamma},f_{\gamma}\right)$ and define $\alg{F}_{\gamma}:=\left\langle S_{\gamma},f_{\gamma}|\emptyset\right\rangle_{\UCS}$.  Let
\[
(S,f):={\coprod_{\gamma\in\Gamma}}^{\CSetC}\left(S_{\gamma},f_{\gamma}\right),
\]
the disjoint union normed set and $\rho_{\gamma}:\left(S_{\gamma},f_{\gamma}\right)\to(S,f)$ the canonical inclusion.  Applying $\UCSAlg$, define $\alg{F}:=\langle S,f|\emptyset\rangle_{\UCS}$ and $\hat{\rho}_{\gamma}:=\UCSAlg\left(\rho_{\gamma}\right)$ for each $\gamma\in\Gamma$, which embeds $\alg{F}_{\gamma}$ into $\alg{F}$ by association of generators.  Letting
\[
R:=\bigcup\hat{\rho}_{\gamma}\left(R_{\gamma}\right),
\]
the embeddings $\hat{\rho}_{\gamma}$ can be lifted using Theorem \ref{univpresent} into the connecting maps for the isomorphism
\[
\langle S,f|R\rangle_{\UCS}\cong_{\UCS}{\coprod_{\gamma\in\Gamma}}^{\UCS}\left\langle S_{\gamma},f_{\gamma}\left|R_{\gamma}\right.\right\rangle_{\UCS}.
\]
The proof of the isomorphism follows from gratuitous use of Theorem \ref{univpresent} and the coproduct realization of the unital free product.  For $\Gamma=\{1,2\}$, this states that
\[
\langle S,f|R\rangle_{\UCS}\cong_{\UCS}\left\langle S_{1},f_{1}\left|R_{1}\right.\right\rangle_{\UCS}*_{\mathbb{C}}\left\langle S_{2},f_{2}\left|R_{2}\right.\right\rangle_{\UCS}.
\]

\subsection{A Tietze transformation theorem for C*-algebras}

With an understanding of the different Tietze transformations, the main theorem can now be proven.  This proof is based on the treatment given in \cite[Section III.5]{baumslag} for group presentations.

For this discussion, only a pair of unital C*-algebras will be considered.  For $j=1,2$, fix a set of C*-relations $R_{j}$ on a normed set $\left(S_{j},f_{j}\right)$.  Define $\alg{F}_{j}:=\left\langle\left. S_{j},f_{j}\right|\emptyset\right\rangle_{\UCS}$, $\alg{A}_{j}:=\left\langle\left. S_{j},f_{j}\right|R_{j}\right\rangle_{\UCS}$, and $q_{j}:\alg{F}_{j}\to\alg{A}_{j}$ the quotient map.

To prove the theorem, one considers $\alg{A}_{1}$ and $\alg{A}_{2}$ as quotients of a single, unified algebra.  To build this structure, define
\[
(T,g):=\left(S_{1},f_{2}\right){\coprod}^{\CSetC}\left(S_{2},f_{2}\right)
\]
to be the disjoint union normed set, $\rho_{j}:\left(S_{j},f_{j}\right)\to(T,g)$ the canonical inclusions for $j=1,2$, and $\alg{G}:=\langle T,g|\emptyset\rangle_{\UCS}$.  Since $\UCSAlg$ is a left adjoint functor, it is cocontinuous, stating that
\[
\alg{G}\cong_{\UCS}\alg{F}_{1}*_{\mathbb{C}}\alg{F}_{2}
\]
with connecting maps $\hat{\rho}_{j}:=\UCSAlg\left(\rho_{j}\right)$ for $j=1,2$.

\[\xymatrix{
&	\alg{G}\\
\alg{F}_{1}\textrm{ }\ar@{->>}[d]_{q_{1}}\ar@{>->}[ur]^{\hat{\rho}_{1}}	&	&	\textrm{ }\alg{F}_{2}\ar@{->>}[d]^{q_{2}}\ar@{>->}[ul]_{\hat{\rho}_{2}}\\
\alg{A}_{1}	&	&	\alg{A}_{2}
}\]

The following lemma is the key step in the main result, allowing $\alg{A}_{1}$ and $\alg{A}_{2}$ to be realized as quotients of $\alg{G}$.  Further, the explicit C*-relations on $(T,g)$ are determined.

\begin{lem}\label{tietze0}
Given the notation above, assume $\Theta_{j}:\alg{G}\to\alg{F}_{j}$ is a unital *-homomorphism satisfying that $\Theta_{j}\circ\hat{\rho}_{j}=id_{\alg{F}_{j}}$.  Then, $\ker\left(q_{j}\circ\Theta_{j}\right)$ is the norm-closed, two-sided ideal $\alg{J}_{j}$ generated by
\[
\hat{\rho}_{j}\left(R_{j}\right)\cup\left\{s-\left(\hat{\rho}_{j}\circ\Theta_{j}\right)(s):s\in S_{3-j}\right\}
\]
in $\alg{G}$.
\end{lem}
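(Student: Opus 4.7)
The plan is to prove the two containments $\alg{J}_j \subseteq \ker(q_j \circ \Theta_j)$ and $\ker(q_j \circ \Theta_j) \subseteq \alg{J}_j$ separately. The first is a routine check on generators; the real work is packaged in a single identity at the level of unital *-homomorphisms on $\alg{G}$, extracted from the uniqueness clause of Theorem \ref{univprop}.

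For the forward containment, since $\ker(q_j \circ \Theta_j)$ is automatically a norm-closed two-sided ideal in $\alg{G}$, it suffices to check that each generator of $\alg{J}_j$ lies in it. For $r \in R_j$, the hypothesis $\Theta_j \circ \hat{\rho}_j = id_{\alg{F}_j}$ gives $(q_j \circ \Theta_j)(\hat{\rho}_j(r)) = q_j(r) = 0$. For $s \in S_{3-j}$, the same hypothesis yields $\Theta_j(s - \hat{\rho}_j \Theta_j(s)) = \Theta_j(s) - \Theta_j(s) = 0$, so $(q_j \circ \Theta_j)(s - \hat{\rho}_j \Theta_j(s)) = 0$ as well.

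For the reverse containment, let $\pi_j : \alg{G} \to \alg{G}/\alg{J}_j$ be the quotient. The key step is the identity
\[
\pi_j = \pi_j \circ \hat{\rho}_j \circ \Theta_j.
\]
Both sides are unital *-homomorphisms out of $\alg{G} = \UCSAlg(T,g)$, so by Theorem \ref{univprop} it is enough to verify agreement on the generating set $\eta_{T,g}(T)$. On a generator coming from $S_j$, the relation $\Theta_j \circ \hat{\rho}_j = id_{\alg{F}_j}$ gives equality directly. On a generator coming from $S_{3-j}$, the element $s - \hat{\rho}_j(\Theta_j(s))$ is one of the defining generators of $\alg{J}_j$, so its image under $\pi_j$ is zero, giving $\pi_j(s) = \pi_j(\hat{\rho}_j \Theta_j(s))$.

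Armed with this identity, the reverse containment follows: if $x \in \ker(q_j \circ \Theta_j)$, then $\Theta_j(x) \in \ker(q_j) = J_{R_j}$, so $\hat{\rho}_j(\Theta_j(x))$ lies in the closed two-sided ideal of $\alg{G}$ generated by $\hat{\rho}_j(R_j)$, hence in $\alg{J}_j$; the identity then yields $\pi_j(x) = \pi_j(\hat{\rho}_j \Theta_j(x)) = 0$, i.e. $x \in \alg{J}_j$. I expect the only subtlety to be the bookkeeping around the two distinct meanings of a symbol $s \in T$, namely as a generator of $\alg{F}_j$ (when $s \in S_j$) and as a generator of $\alg{G}$, but the paper's convention of suppressing $\eta_{T,g}$ and $\hat{\rho}_j$ on generators makes this essentially automatic.
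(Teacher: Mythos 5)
Your proposal is correct and follows essentially the same route as the paper's own proof: verify the generators of $\alg{J}_{j}$ lie in $\ker\left(q_{j}\circ\Theta_{j}\right)$, establish the identity $\pi_{j}=\pi_{j}\circ\hat{\rho}_{j}\circ\Theta_{j}$ on generators via Theorem \ref{univprop}, and then push an arbitrary element of the kernel into $\alg{J}_{j}$ using $\Theta_{j}(x)\in J_{R_{j}}$. The only cosmetic difference is that you fold the paper's final two-step computation ($b-\left(\hat{\rho}_{j}\circ\Theta_{j}\right)(b)\in\alg{J}_{j}$ together with $\left(\hat{\rho}_{j}\circ\Theta_{j}\right)(b)\in\alg{J}_{j}$) into the single observation $\pi_{j}(x)=\pi_{j}\left(\hat{\rho}_{j}\Theta_{j}(x)\right)=0$, which is equivalent.
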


\begin{proof}

For $r\in R_{j}$ and $s\in S_{3-j}$,
\[
\left(q_{j}\circ\Theta_{j}\right)\left(\hat{\rho}_{j}(r)\right)
=q_{j}\left(id_{\alg{F}_{j}}(r)\right)
=q_{j}(r)
=0
\]
and
\[\begin{array}{rcl}
\left(q_{j}\circ\Theta_{j}\right)\left(s-\left(\hat{\rho}_{j}\circ\Theta_{j}\right)(s)\right)	&	=	&	\left(q_{j}\circ\Theta_{j}\right)\left(s\right)-\left(q_{j}\circ\Theta_{j}\circ\hat{\rho}_{j}\circ\Theta_{j}\right)(s)\\
&	=	&	\left(q_{j}\circ\Theta_{j}\right)\left(s\right)-\left(q_{j}\circ id_{\alg{F}_{j}}\circ\Theta_{j}\right)(s)\\
&	=	&	\left(q_{j}\circ\Theta_{j}\right)\left(s\right)-\left(q_{j}\circ\Theta_{j}\right)(s)\\
&	=	&	0.\\
\end{array}\]
Hence, $\hat{\rho}_{j}\left(R_{j}\right)\cup\left\{s-\left(\hat{\rho}_{j}\circ\Theta_{j}\right)(s):s\in S_{3-j}\right\}\subseteq\ker\left(q_{j}\circ\Theta_{j}\right)$ so $\alg{J}_{j}\subseteq\ker\left(q_{j}\circ\Theta_{j}\right)$.

Let $\gamma:\alg{G}\to\alg{G}/\alg{J}_{j}$ be the quotient map.  For all $s\in S_{j}$ and $t\in S_{3-j}$,
\[\begin{array}{rcl}
\left(\gamma\circ\hat{\rho}_{j}\circ\Theta_{j}\right)(s)	&	=	&	\left(\gamma\circ\hat{\rho}_{j}\circ\Theta_{j}\circ\hat{\rho}_{j}\right)(s)\\
&	=	&	\left(\gamma\circ\hat{\rho}_{j}\circ id_{\alg{F}_{j}}\right)(s)\\
&	=	&	\left(\gamma\circ\hat{\rho}_{j}\right)(s)\\
&	=	&	\gamma(s)\\
\end{array}\]
and
\[\begin{array}{rcl}
\left(\gamma\circ\hat{\rho}_{j}\circ\Theta_{j}\right)(t)	&	=	&	\left(\gamma\circ\hat{\rho}_{j}\circ\Theta_{j}\right)(t)+\gamma\left(t-\left(\hat{\rho}_{j}\circ\Theta_{j}\right)(t)\right)\\
&	=	&	\left(\gamma\circ\hat{\rho}_{j}\circ\Theta_{j}\right)(t)+\gamma(t)-\left(\gamma\circ\hat{\rho}_{j}\circ\Theta_{j}\right)(t)\\
&	=	&	\gamma(t).\\
\end{array}\]
By Theorem \ref{univprop}, $\gamma=\gamma\circ\hat{\rho}_{j}\circ\Theta_{j}$.

For $b\in\ker\left(q_{j}\circ\Theta_{j}\right)$, then $\Theta_{j}(b)\in\ker\left(q_{j}\right)=J_{R_{j}}$, the norm-closed, two-sided ideal generated by $R_{j}$ in $\alg{F}_{j}$.  Thus, $\left(\hat{\rho}_{j}\circ\Theta_{j}\right)(b)\in\alg{J}_{j}$.  Also,
\[
\gamma\left(b-\left(\hat{\rho}_{j}\circ\Theta_{j}\right)(b)\right)
=\gamma(b)-\left(\gamma\circ\hat{\rho}_{j}\circ\Theta_{j}\right)(b)
=\gamma(b)-\gamma(b)
=0
\]
so $b-\left(\hat{\rho}_{j}\circ\Theta_{j}\right)(b)\in\ker(\gamma)=\alg{J}_{j}$.  Therefore, $b\in\alg{J}_{j}$.

\end{proof}

Now, the main result can be proven.

\begin{thm}[Tietze Theorem for $\UCS$]\label{tietze1}
$\alg{A}_{1}\cong_{\UCS}\alg{A}_{2}$ if and only if there is a sequence of four Tietze transformations changing the presentation of $\alg{A}_{1}$ into the presentation for $\alg{A}_{2}$.
\end{thm}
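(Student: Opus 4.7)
The ``if'' direction is immediate from the discussion preceding this subsection: each of the four types of Tietze transformations was shown in Section \ref{transformations} to yield a $\UCS$-isomorphic presentation, so any finite sequence of them preserves the isomorphism class. For the forward direction, the plan, inspired by \cite[Section III.5]{baumslag}, is to realize both $\alg{A}_1$ and $\alg{A}_2$ as quotients of the common free-product algebra $\alg{G}$ by \emph{the same} norm-closed two-sided ideal, and then read off the four-step sequence of Tietze moves directly from that common description.

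Given a unital *-isomorphism $\phi:\alg{A}_1\to\alg{A}_2$, one first constructs the $\Theta_j$ required by Lemma \ref{tietze0}. For each $s\in S_2$, choose a lift $w_s\in\alg{F}_1$ of $\phi^{-1}(q_2(s))$ with $\|w_s\|_{\alg{F}_1}\leq f_2(s)$; this is possible because $\|\phi^{-1}(q_2(s))\|_{\alg{A}_1}=\|q_2(s)\|_{\alg{A}_2}\leq f_2(s)$ and C*-algebra quotients attain their coset norms. Combined with $s\mapsto s$ for $s\in S_1$, the assignment is contractive, so Theorem \ref{univprop} produces a unital *-homomorphism $\Theta_1:\alg{G}\to\alg{F}_1$ with $\Theta_1\circ\hat{\rho}_1=id_{\alg{F}_1}$; build $\Theta_2$ symmetrically via $\phi$ using lifts of $\phi(q_1(s))$ for $s\in S_1$. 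A check on the generators in $S_1$ and $S_2$ shows that $\phi\circ q_1\circ\Theta_1$ and $q_2\circ\Theta_2$ agree, hence coincide on $\alg{G}$ by Theorem \ref{univprop}. Since $\phi$ is injective, $\ker(q_1\circ\Theta_1)=\ker(q_2\circ\Theta_2)$, and Lemma \ref{tietze0} identifies these kernels with $\alg{J}_1$ and $\alg{J}_2$ respectively, giving the crucial equality $\alg{J}_1=\alg{J}_2$.

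With $\alg{J}_1=\alg{J}_2$ in hand, the four Tietze transformations carrying $\langle S_1,f_1|R_1\rangle_{\UCS}$ to $\langle S_2,f_2|R_2\rangle_{\UCS}$ are: (i) add the generators of $S_2$ with norm bounds $f_2$, each accompanied by the redundant-generator relation $s-(\hat{\rho}_1\circ\Theta_1)(s)$, legitimate because $\|\Theta_1(s)\|_{\alg{F}_1}=\|w_s\|\leq f_2(s)$; (ii) add the relations $\hat{\rho}_2(R_2)\cup\{s-(\hat{\rho}_2\circ\Theta_2)(s):s\in S_1\}$, redundant since they lie in $\alg{J}_2=\alg{J}_1$; (iii) remove the relations from step (i), now redundant in $\alg{J}_1=\alg{J}_2$; (iv) remove the generators of $S_1$, each eliminated via its defining relation from step (ii). The resulting presentation on $(S_2,f_2)$ has relation set $R_2$ (after identifying $\hat{\rho}_2(R_2)$ with $R_2$), completing the sequence. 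The main obstacle is the bookkeeping in step (i): obtaining lifts $w_s$ with the correct norm control is what makes the ``add generators'' move legitimate, and this is precisely where the normed-set framework of Section \ref{construction} pays off, since without the explicit norm bounds $f_j$ the transformations could not be carried out uniformly.
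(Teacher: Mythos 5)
Your proposal is correct and follows essentially the same route as the paper: where you construct $\Theta_{1}$ by hand via norm-attaining lifts out of the quotient, the paper instead cites the projectivity of $\alg{F}_{1}$ and $\alg{F}_{2}$ from \cite[Lemma 8.1.4]{loring1} (whose proof is exactly your lifting argument), and both arguments then proceed through Lemma \ref{tietze0}, the identity $\ker\left(q_{1}\circ\Theta_{1}\right)=\ker\left(q_{2}\circ\Theta_{2}\right)$ giving $\alg{J}_{1}=\alg{J}_{2}$, and the same four-move factorization of $\phi$. One small bookkeeping correction: in your step (iii) you must remove $\hat{\rho}_{1}\left(R_{1}\right)$ together with the relations $s-\left(\hat{\rho}_{1}\circ\Theta_{1}\right)(s)$, not just the latter, so that the surviving relation set is exactly $\hat{\rho}_{2}\left(R_{2}\right)\cup\left\{s-\left(\hat{\rho}_{2}\circ\Theta_{2}\right)(s):s\in S_{1}\right\}$ and step (iv) is a legitimate single generator-removal transformation.
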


\begin{proof}

($\Leftarrow$) Since each Tietze transformation is an isomorphism, a sequence of four Tietze transformations determines an isomorphism between $\alg{A}_{1}$ and $\alg{A}_{2}$.

($\Rightarrow$) Assuming that $\alg{A}_{1}\cong_{\UCS}\alg{A}_{2}$, let $\phi:\alg{A}_{1}\to\alg{A}_{2}$ be a unital *-isomorphism.  First, maps $\Theta_{j}$ satisfying the conditions of Lemma \ref{tietze0} are created.  The purpose of these maps is to relate generators in $S_{1}$ in terms of generators in $S_{2}$, and vice versa.
\[\xymatrix{
&	\alg{G}\\
\alg{F}_{1}\textrm{ }\ar@{->>}[d]_{q_{1}}\ar@{>->}[ur]^{\hat{\rho}_{1}}	&	&	\textrm{ }\alg{F}_{2}\ar@{->>}[d]^{q_{2}}\ar@{>->}[ul]_{\hat{\rho}_{2}}\\
\alg{A}_{1}\ar[rr]_{\phi}^{\cong_{\UCS}}	&	&	\alg{A}_{2}
}\]
From \cite[Lemma 8.1.4]{loring1}, the unital C*-algebras $\alg{F}_{1}$ and $\alg{F}_{2}$ are projective with respect to all surjections in $\UCS$.  Thus, there is a unital *-homomorphism $\psi_{2}:\alg{F}_{2}\to\alg{F}_{1}$ such that $\phi\circ q_{1}\circ\psi_{2}=q_{2}$.  Using the coproduct characterization of $\alg{G}$, there is a unique unital *-homomorphism $\Theta_{1}:\alg{G}\to\alg{F}_{1}$ such that $\Theta_{1}\circ\hat{\rho}_{1}=id_{\alg{F}_{1}}$ and $\Theta_{1}\circ\hat{\rho}_{2}=\psi_{2}$.

Similarly, there is a unital *-homomorphism $\psi_{1}:\alg{F}_{1}\to\alg{F}_{2}$ such that $\phi^{-1}\circ q_{2}\circ\psi_{1}=q_{1}$.  Likewise, there is a unique unital *-homomorphism $\Theta_{2}:\alg{G}\to\alg{F}_{2}$ such that $\Theta_{2}\circ\hat{\rho}_{1}=\psi_{1}$ and $\Theta_{2}\circ\hat{\rho}_{2}=id_{\alg{F}_{2}}$.

Further, observe that
\[
\phi\circ q_{1}\circ\Theta_{1}\circ\hat{\rho}_{1}
=\phi\circ q_{1}\circ id_{\alg{F}_{1}}
=\phi\circ q_{1}
=q_{2}\circ\psi_{1}
=q_{2}\circ\Theta_{2}\circ\hat{\rho}_{1}
\]
and
\[
\phi\circ q_{1}\circ\Theta_{1}\circ\hat{\rho}_{2}
=\phi\circ q_{1}\circ\psi_{2}
=q_{2}
=q_{2}\circ id_{\alg{F}_{2}}
=q_{2}\circ\Theta_{2}\circ\hat{\rho}_{2}
\]
so by universal property of the coproduct, $\phi\circ q_{1}\circ\Theta_{1}=q_{2}\circ\Theta_{2}$.

Next, $\phi$ is to be decomposed into a composition of Tietze isomorphisms.  To this end, let $M_{j}:=\left\{s-\left(\hat{\rho}_{j}\circ\Theta_{j}\right)(s):s\in S_{3-j}\right\}\subset\alg{G}$ for $j=1,2$.  By Lemma \ref{tietze0}, $\ker\left(q\circ\Theta_{j}\right)$ is the norm-closed, two-sided ideal generated by $\hat{\rho}_{j}\left(R_{j}\right)\cup M_{j}$.  Observe that as $\phi$ is an isomorphism,
\[
\ker\left(q_{2}\circ\Theta_{2}\right)
=\ker\left(\phi\circ q_{1}\circ\Theta_{1}\right)
=\ker\left(q_{1}\circ\Theta_{1}\right).
\]
Thus, the ideal generated by $\hat{\rho}_{1}\left(R_{1}\right)\cup M_{1}$ is the same as the ideal generated by $\hat{\rho}_{2}\left(R_{2}\right)\cup M_{2}$.

Therefore, there are C*-relation-adding and generator-adding Tietze isomorphisms $\alpha,\beta,\sigma,\tau$ below.
\[\xymatrix{
\alg{A}_{1}\ar[r]^{\phi}_{\cong_{\UCS}}\ar[d]_{\alpha}^{\cong_{\UCS}}	&	\alg{A}_{2}\ar[d]^{\beta}_{\cong_{\UCS}}\\
\left\langle T,g\left|\hat{\rho}_{1}\left(R_{1}\right)\cup M_{1}\right.\right\rangle_{\UCS}\ar[d]^(.4){\sigma}_(.4){\cong_{\UCS}}	&	\left\langle T,g\left|\hat{\rho}_{2}\left(R_{2}\right)\cup M_{2}\right.\right\rangle_{\UCS}\ar[dl]_(.4){\tau}^(.4){\cong_{\UCS}}\\
\left\langle T,g\left|\hat{\rho}_{1}\left(R_{1}\right)\cup M_{1}\cup\hat{\rho}_{2}\left(R_{2}\right)\cup M_{2}\right.\right\rangle_{\UCS}
}\]
Fix $s\in S_{1}$.  In $\left\langle T,g\left|\hat{\rho}_{1}\left(R_{1}\right)\cup M_{1}\cup\hat{\rho}_{2}\left(R_{2}\right)\cup M_{2}\right.\right\rangle_{\UCS}$, $\left(\sigma\circ\alpha\circ q_{1}\right)(s)$ is the generator $[s]$, and $\left(\tau\circ\beta\circ\phi\circ q_{1}\right)(s)=\left(\tau\circ\beta\circ q_{2}\right)\left(\psi_{1}(s)\right)$ is $\left[\hat{\rho}_{2}\left(\psi_{1}(s)\right)\right]$.  Also,
\[
[s]
=\left[s-\left(\hat{\rho}_{2}\circ\Theta_{2}\right)(s)\right]+\left[\left(\hat{\rho}_{2}\circ\Theta_{2}\right)(s)\right]
=\left[\left(\hat{\rho}_{2}\circ\Theta_{2}\right)(s)\right]
=\left[\hat{\rho}_{2}\left(\psi_{1}(s)\right)\right]
\]
in $\left\langle T,g\left|\hat{\rho}_{1}\left(R_{1}\right)\cup M_{1}\cup\hat{\rho}_{2}\left(R_{2}\right)\cup M_{2}\right.\right\rangle_{\UCS}$.  Thus,
\[
\left(\tau\circ\beta\circ\phi\circ q_{1}\right)(s)=\left(\sigma\circ\alpha\circ q_{1}\right)(s).
\]
As $s\in S_{1}$ was arbitrary, Theorem \ref{univprop} states that $\tau\circ\beta\circ\phi\circ q_{1}=\sigma\circ\alpha\circ q_{1}$.  By the universal property of the quotient, $\tau\circ\beta\circ\phi=\sigma\circ\alpha$.  As $\tau$ and $\beta$ are invertible, $\phi=\beta^{-1}\circ\tau^{-1}\circ\sigma\circ\alpha$.  Hence, $\phi$ is a sequence of isomorphisms given by Tietze transformations.

\end{proof}

Now, a Tietze transformation is \emph{elementary} if only one generator or C*-relation is changed.  As such, any Tietze transformation where finitely many changes are made can be realized by a finite sequence of elementary Tietze transformations.  The terms \emph{finitely generated}, \emph{finitely related}, and \emph{finitely presented} already exist in the literature and coincide readily with a finite generation set, finite relation set, and finite generation and relation sets in this context.  Appealing to \cite[Proposition 41]{hadwin2003}, every finitely generated unital C*-algebra is finitely presented.  Thus, the following corollary is the direct analog of the result from \cite{tietze}.

\begin{cor}\label{tietze2}
Given unital C*-algebras $\alg{A}_{1}$ and $\alg{A}_{2}$ are finitely generated in $\UCS$, $\alg{A}_{1}\cong_{\UCS}\alg{A}_{2}$ if and only if there is a finite sequence of elementary Tietze transformations changing the presentation of $\alg{A}_{1}$ into the presentation for $\alg{A}_{2}$.
\end{cor}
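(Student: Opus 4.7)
The plan is to derive this as a finiteness refinement of Theorem \ref{tietze1}. The reverse implication is immediate: each elementary Tietze transformation is, by definition, an isomorphism, so any finite composition of them is an isomorphism between the presented algebras.

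For the forward direction, assume $\phi:\alg{A}_{1}\to\alg{A}_{2}$ is a unital *-isomorphism. The first step is to ensure both presentations can be taken to be finite, with finite $S_{j}$ and finite $R_{j}$. Since each $\alg{A}_{j}$ is finitely generated, \cite[Proposition 41]{hadwin2003} supplies a finite presentation; if the originally given presentations are already finite, this reduction is free, and otherwise one first passes to finite presentations via Tietze transformations before invoking the main theorem.

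With finite presentations fixed, apply Theorem \ref{tietze1} to produce the four Tietze transformations $\alpha,\sigma,\tau^{-1},\beta^{-1}$ implementing $\phi$. Reading through the proof of that theorem, the unified algebra $\alg{G}$ is built on the finite disjoint union $T=S_{1}\coprod S_{2}$, and the added relation sets $M_{j}=\{s-(\hat{\rho}_{j}\circ\Theta_{j})(s):s\in S_{3-j}\}$ are each finite, being indexed by the finite set $S_{3-j}$. Combined with the finiteness of $R_{1}$ and $R_{2}$, this shows that each of the four transformations introduces or removes only finitely many generators or relations.

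The final step is to decompose each such finite-scale transformation into an explicit finite sequence of elementary ones, one generator or one relation at a time. This is the combinatorial unpacking noted in the paragraph preceding the corollary, and concatenating the four resulting finite sequences yields the required finite sequence of elementary Tietze transformations. The main obstacle is bookkeeping rather than substantive: one must verify that at each intermediate elementary step the intermediate presentation remains well-defined and equipped with the correct norm bounds on its generators, which follows because adding or removing one redundant generator (with its prescribed norm value) or one redundant relation at a time is a valid Tietze move in the sense of Section \ref{transformations}.
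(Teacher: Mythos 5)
Your proposal is correct and follows essentially the same route as the paper, which proves the corollary only implicitly via the paragraph preceding it: pass to finite presentations using \cite[Proposition 41]{hadwin2003}, observe that the four transformations produced by Theorem \ref{tietze1} each alter only the finite sets $S_{j}$, $R_{j}$, and $M_{j}$, and decompose each into elementary moves. Your added bookkeeping about norm bounds and the finiteness of $M_{j}$ is a harmless elaboration of what the paper leaves tacit.
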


\section{Examples}\label{examples}

With the main results proven, this section holds some examples of applying Tietze transformations to presentations of C*-algebras.  Several examples are worked in \cite[\S2.4.2,3.5-6]{gerbracht} for *-algebraic relations.  The examples in the current work will be distinct and make more use of C*-relations from the functional calculus.

The first two examples are elementary, but pedagogically useful in demonstrating the implications of Theorem \ref{tietze1}.  They also demonstrate a restriction imposed when considering the general category $\CS$, rather than the unital category $\UCS$.

\begin{ex}[Positivity and Self-Adjointness, $\UCS$]\label{example1}
Recall that
\[
\left\langle(x,1)\left|x=x^{*}\right.\right\rangle_{\UCS}\cong_{\UCS}C[0,1]
\]
and
\[
\langle(y,1)|y\geq0\rangle_{\UCS}\cong_{\UCS}C[0,1],
\]
so Corollary \ref{tietze2} states there is a sequence of elementary Tietze transformations changing the first presentation into the second.  Letting $y=\frac{1}{2}x+\frac{1}{2}\mathbb{1}$, one such sequence would be the following.
\[\begin{array}{rcl}
\left\langle (x,1)\left|x=x^{*}\right.\right\rangle_{\UCS}	&	\cong_{\UCS}	&	\left\langle (x,1),(y,1)\left|\begin{array}{c}x=x^{*},\\ y=\frac{1}{2}x+\frac{1}{2}\mathbb{1}\end{array}\right.\right\rangle_{\UCS}\\[20pt]
&	\cong_{\UCS}	&	\left\langle (x,1),(y,1)\left|\begin{array}{c}x=x^{*},\\ y=\frac{1}{2}x+\frac{1}{2}\mathbb{1},y\geq 0\end{array}\right.\right\rangle_{\UCS}\\[20pt]
&	\cong_{\UCS}	&	\left\langle (x,1),(y,1)\left|\begin{array}{c}x=x^{*},\\ y=\frac{1}{2}x+\frac{1}{2}\mathbb{1},y\geq 0\\ x=2y-\mathbb{1}\end{array}\right.\right\rangle_{\UCS}\\[25pt]
&	\cong_{\UCS}	&	\left\langle (x,1),(y,1)\left|\begin{array}{c}x=x^{*},\\ y\geq 0\\ x=2y-\mathbb{1}\end{array}\right.\right\rangle_{\UCS}\\[20pt]
&	\cong_{\UCS}	&	\left\langle (x,1),(y,1)\left|\begin{array}{c}y\geq 0\\ x=2y-\mathbb{1}\end{array}\right.\right\rangle_{\UCS}\\[15pt]
&	\cong_{\UCS}	&	\left\langle (y,1)\left|y\geq 0\right.\right\rangle_{\UCS}\\
\end{array}\]
\end{ex}

\begin{ex}[Positivity and Self-Adjointness, $\CS$]
Recall that
\[
\left\langle(x,1)\left|x=x^{*}\right.\right\rangle_{\CS}\cong_{\CS}C_{0}\left([-1,0)\cup(0,1]\right)
\]
and
\[
\langle(y,1)|y\geq0\rangle_{\CS}\cong_{\CS}C_{0}(0,1].
\]
The analogous theorem to Theorem \ref{tietze1} for $\CS$ does hold by a nearly identical proof.  Thus, no sequence of Tietze transformations can exist to change the first presentation into the second.  This shows that any sequence of transformations for Example \ref{example1} must involve the unit.
\end{ex}

\begin{ex}[Left-Invertibility]
For $\lambda,\mu\geq0$, consider the unital C*-algebra of a left-invertible element,
\[
\alg{L}:=\left\langle(x,\lambda)\left|\mu^{2}x^{*}x\geq\mathbb{1}\right.\right\rangle_{\UCS}.
\]
If $\lambda\mu<1$, $\|\mathbb{1}\|_{\alg{L}}<1$, meaning $\mathbb{1}=0$ in $\alg{L}$.  Hence, $\alg{L}\cong_{\UCS}\mathbb{O}$, the zero algebra.

For $\lambda\mu\geq1$, the generator $x$ can be split via the continuous functional calculus and the polar decomposition into
\[
q:=\left(x^{*}x\right)^{\frac{1}{2}}
\]
and
\[
u:=\mu x\left(p\left(\mu\left(x^{*}x\right)^{\frac{1}{2}}-\mathbb{1}\right)+\mathbb{1}\right)^{-1},
\]
where $p:\mathbb{R}\to\mathbb{R}$ is defined as in Example \ref{relations}.  Then, Tietze transformations and use of the free product characterize the algebra.  Verification of each C*-relation's validity is an exercise in the continuous functional calculus.  Here, $\alg{T}$ is the Toeplitz algebra.
\[\begin{array}{rcl}
\alg{L}	&	\cong_{\UCS}	&	\left\langle(x,\lambda),(q,\lambda)\left|\mu^{2}x^{*}x\geq\mathbb{1}, q=\left(x^{*}x\right)^{\frac{1}{2}}\right.\right\rangle_{\UCS}\\[15pt]

&	\cong_{\UCS}	&	\left\langle\begin{array}{c}(x,\lambda),(q,\lambda),\\ (u,\lambda\mu)\end{array}\left|\begin{array}{c}\mu^{2}x^{*}x\geq\mathbb{1},q=\left(x^{*}x\right)^{\frac{1}{2}},\\ u=\mu x\left(p\left(\mu\left(x^{*}x\right)^{\frac{1}{2}}-\mathbb{1}\right)+\mathbb{1}\right)^{-1}\end{array}\right.\right\rangle_{\UCS}\\[20pt]

&	\cong_{\UCS}	&	\left\langle\begin{array}{c}(x,\lambda),(q,\lambda),\\ (u,\lambda\mu)\end{array}\left|\begin{array}{c}\mu^{2}x^{*}x\geq\mathbb{1},q=\left(x^{*}x\right)^{\frac{1}{2}},\\ u=\mu x\left(p\left(\mu\left(x^{*}x\right)^{\frac{1}{2}}-\mathbb{1}\right)+\mathbb{1}\right)^{-1},\\ \mathbb{1}\leq\mu q\end{array}\right.\right\rangle_{\UCS}\\[20pt]

&	\cong_{\UCS}	&	\left\langle\begin{array}{c}(x,\lambda),(q,\lambda),\\ (u,\lambda\mu)\end{array}\left|\begin{array}{c}\mu^{2}x^{*}x\geq\mathbb{1},q=\left(x^{*}x\right)^{\frac{1}{2}},\\ u=\mu x\left(p\left(\mu\left(x^{*}x\right)^{\frac{1}{2}}-\mathbb{1}\right)+\mathbb{1}\right)^{-1},\\ \mathbb{1}\leq\mu q, u^{*}u=\mathbb{1}\end{array}\right.\right\rangle_{\UCS}\\[20pt]

&	\cong_{\UCS}	&	\left\langle\begin{array}{c}(x,\lambda),(q,\lambda),\\ (u,\lambda\mu)\end{array}\left|\begin{array}{c}\mu^{2}x^{*}x\geq\mathbb{1},q=\left(x^{*}x\right)^{\frac{1}{2}},\\ u=\mu x\left(p\left(\mu\left(x^{*}x\right)^{\frac{1}{2}}-\mathbb{1}\right)+\mathbb{1}\right)^{-1},\\ \mathbb{1}\leq\mu q, u^{*}u=\mathbb{1},x=uq\end{array}\right.\right\rangle_{\UCS}\\[20pt]

&	\cong_{\UCS}	&	\left\langle\begin{array}{c}(x,\lambda),(q,\lambda),\\ (u,\lambda\mu)\end{array}\left|\begin{array}{c}q=\left(x^{*}x\right)^{\frac{1}{2}},\\ u=\mu x\left(p\left(\mu\left(x^{*}x\right)^{\frac{1}{2}}-\mathbb{1}\right)+\mathbb{1}\right)^{-1},\\ \mathbb{1}\leq\mu q, u^{*}u=\mathbb{1},x=uq\end{array}\right.\right\rangle_{\UCS}\\[20pt]

&	\cong_{\UCS}	&	\left\langle\begin{array}{c}(x,\lambda),(q,\lambda),\\ (u,\lambda\mu)\end{array}\left|\begin{array}{c}u=\mu x\left(p\left(\mu\left(x^{*}x\right)^{\frac{1}{2}}-\mathbb{1}\right)+\mathbb{1}\right)^{-1},\\ \mathbb{1}\leq\mu q, u^{*}u=\mathbb{1},x=uq\end{array}\right.\right\rangle_{\UCS}\\[20pt]

&	\cong_{\UCS}	&	\left\langle\left.\begin{array}{c}(x,\lambda),(q,\lambda),\\ (u,\lambda\mu)\end{array}\right|\mathbb{1}\leq\mu q, u^{*}u=\mathbb{1},x=uq\right\rangle_{\UCS}\\[20pt]

&	\cong_{\UCS}	&	\left\langle(q,\lambda),(u,\lambda\mu)\left|\mathbb{1}\leq\mu q, u^{*}u=\mathbb{1}\right.\right\rangle_{\UCS}\\[15pt]

&	\cong_{\UCS}	&	\langle(q,\lambda)|\mathbb{1}\leq\mu q\rangle_{\UCS}*_{\mathbb{C}}\left\langle(u,\lambda\mu)\left|u^{*}u=\mathbb{1}\right.\right\rangle_{\UCS}\\[15pt]

&	\cong_{\UCS}	&	C\left[\frac{1}{\mu},\lambda\right]*_{\mathbb{C}}\alg{T}\\[15pt]

\end{array}\]
In summary,
\[\begin{array}{rcl}
\left\langle(x,\lambda)\left|\mu^{2}x^{*}x\geq\mathbb{1}\right.\right\rangle_{\UCS}	&	\cong_{\UCS}	&	\left\{\begin{array}{cc}
\mathbb{O},	&	\lambda\mu<1,\\
\alg{T},	&	\lambda\mu=1,\\
C[0,1]*_{\mathbb{C}}\alg{T},	&	\lambda\mu>1.\\
\end{array}\right.\\
\end{array}\]
Similar transformations can be used to prove the following isomorphisms for the unital C*-algebras of a single right-invertible or a single true invertible.
\[\begin{array}{rcl}
\left\langle(x,\lambda)\left|\mu^{2}xx^{*}\geq\mathbb{1}\right.\right\rangle_{\UCS}	&	\cong_{\UCS}	&	\left\{\begin{array}{cc}
\mathbb{O},	&	\lambda\mu<1,\\
\alg{T},	&	\lambda\mu=1,\\
C[0,1]*_{\mathbb{C}}\alg{T},	&	\lambda\mu>1.\\
\end{array}\right.\\[20pt]
\left\langle(x,\lambda)\left|\mu^{2}x^{*}x\geq\mathbb{1},\mu^{2}xx^{*}\geq\mathbb{1}\right.\right\rangle_{\UCS}	&	\cong_{\UCS}	&	\left\{\begin{array}{cc}
\mathbb{O},	&	\lambda\mu<1,\\
C(\mathbb{T}),	&	\lambda\mu=1,\\
C[0,1]*_{\mathbb{C}}C(\mathbb{T}),	&	\lambda\mu>1.\\
\end{array}\right.\\
\end{array}\]
\end{ex}

\begin{ex}[Idempotency]
For $\lambda\geq0$, consider the unital C*-algebra of a single idempotent element,
\[
\alg{A}:=\left\langle(x,\lambda)\left|x=x^{2}\right.\right\rangle_{\UCS}.
\]
If $\lambda<1$, then $x=0$.  Hence, $\alg{A}\cong_{\UCS}\mathbb{C}$.

For $\lambda\geq1$, the formula for the range projection of an idempotent from \cite[Proposition IV.1.1]{davidson} will be used on $x$ and $x^{*}$, giving two new generators:
\[
r:=xx^{*}\left(\mathbb{1}+\left(x-x^{*}\right)^{*}\left(x-x^{*}\right)\right)^{-1}
\]
and
\[
k:=(\mathbb{1}-x)(\mathbb{1}-x)^{*}\left(\mathbb{1}+\left(x^{*}-x\right)^{*}\left(x^{*}-x\right)\right)^{-1}.
\]
To use \cite[Theorem 1]{vidav}, define $f_{\lambda}:[0,1]\to\mathbb{C}$ by
\[
f_{\lambda}(\nu):=\left\{\begin{array}{cc}
\nu,	&	0\leq\nu\leq\sqrt{1-\lambda^{-2}},\\
\frac{\sqrt{1-\lambda^{-2}}}{\sqrt{1-\lambda^{-2}}-1}(\nu-1),	&	\sqrt{1-\lambda^{-2}}<\mu\leq 1.
\end{array}\right.
\]
This function ensures that the inverse in the formula for $x$ in terms of $r$ and $k$ exists in $\langle (x,\lambda),(r,1),(k,1)|\emptyset\rangle_{\UCS}$ before quotienting:
\[
x=\left(\mathbb{1}-f_{\lambda}\left(rk^{*}kr^{*}\right)\right)^{-1}(r-rk).
\]
Thus, Tietze transformations give an alternative realization of this algebra.  Verification of each C*-relation's validity is an exercise in the continuous functional calculus.
\[\begin{array}{rcl}
\alg{A}	&	\cong_{\UCS}	&	\left\langle(x,\lambda),(r,1)\left|\begin{array}{c}x=x^{2},\\ r=xx^{*}\left(\mathbb{1}+\left(x-x^{*}\right)^{*}\left(x-x^{*}\right)\right)^{-1}\end{array}\right.\right\rangle_{\UCS}\\[10pt]

&	\cong_{\UCS}	&	\left\langle\begin{array}{c}(x,\lambda),\\ (r,1),(k,1)\end{array}\left|\begin{array}{c}x=x^{2},\\ r=xx^{*}\left(\mathbb{1}+\left(x-x^{*}\right)^{*}\left(x-x^{*}\right)\right)^{-1},\\ k=(\mathbb{1}-x)(\mathbb{1}-x)^{*}\left(\mathbb{1}+\left(x^{*}-x\right)^{*}\left(x^{*}-x\right)\right)^{-1}\end{array}\right.\right\rangle_{\UCS},\\[20pt]

&	\cong_{\UCS}	&	\left\langle\begin{array}{c}(x,\lambda),\\ (r,1),(k,1)\end{array}\left|\begin{array}{c}x=x^{2},r^{2}=r\\ r=xx^{*}\left(\mathbb{1}+\left(x-x^{*}\right)^{*}\left(x-x^{*}\right)\right)^{-1},\\ k=(\mathbb{1}-x)(\mathbb{1}-x)^{*}\left(\mathbb{1}+\left(x^{*}-x\right)^{*}\left(x^{*}-x\right)\right)^{-1}\end{array}\right.\right\rangle_{\UCS}\\[20pt]

&	\cong_{\UCS}	&	\left\langle\begin{array}{c}(x,\lambda),\\ (r,1),(k,1)\end{array}\left|\begin{array}{c}x=x^{2},r^{2}=r^{*}=r\\ r=xx^{*}\left(\mathbb{1}+\left(x-x^{*}\right)^{*}\left(x-x^{*}\right)\right)^{-1},\\ k=(\mathbb{1}-x)(\mathbb{1}-x)^{*}\left(\mathbb{1}+\left(x^{*}-x\right)^{*}\left(x^{*}-x\right)\right)^{-1}\end{array}\right.\right\rangle_{\UCS}\\[20pt]

&	\cong_{\UCS}	&	\left\langle\begin{array}{c}(x,\lambda),\\ (r,1),(k,1)\end{array}\left|\begin{array}{c}x=x^{2},r^{2}=r^{*}=r,k^{2}=k\\ r=xx^{*}\left(\mathbb{1}+\left(x-x^{*}\right)^{*}\left(x-x^{*}\right)\right)^{-1},\\ k=(\mathbb{1}-x)(\mathbb{1}-x)^{*}\left(\mathbb{1}+\left(x^{*}-x\right)^{*}\left(x^{*}-x\right)\right)^{-1}\end{array}\right.\right\rangle_{\UCS}\\[20pt]

&	\cong_{\UCS}	&	\left\langle\begin{array}{c}(x,\lambda),\\ (r,1),(k,1)\end{array}\left|\begin{array}{c}x=x^{2},r^{2}=r^{*}=r,k^{2}=k^{*}=k\\ r=xx^{*}\left(\mathbb{1}+\left(x-x^{*}\right)^{*}\left(x-x^{*}\right)\right)^{-1},\\ k=(\mathbb{1}-x)(\mathbb{1}-x)^{*}\left(\mathbb{1}+\left(x^{*}-x\right)^{*}\left(x^{*}-x\right)\right)^{-1}\end{array}\right.\right\rangle_{\UCS}\\[20pt]

&	\cong_{\UCS}	&	\left\langle\begin{array}{c}(x,\lambda),\\ (r,1),(k,1)\end{array}\left|\begin{array}{c}x=x^{2},r^{2}=r^{*}=r,k^{2}=k^{*}=k,\|rk\|\leq\sqrt{1-\lambda^{-2}}\\ r=xx^{*}\left(\mathbb{1}+\left(x-x^{*}\right)^{*}\left(x-x^{*}\right)\right)^{-1},\\ k=(\mathbb{1}-x)(\mathbb{1}-x)^{*}\left(\mathbb{1}+\left(x^{*}-x\right)^{*}\left(x^{*}-x\right)\right)^{-1}\end{array}\right.\right\rangle_{\UCS}\\[25pt]

&	\cong_{\UCS}	&	\left\langle\begin{array}{c}(x,\lambda),\\ (r,1),(k,1)\end{array}\left|\begin{array}{c}x=x^{2},r^{2}=r^{*}=r,k^{2}=k^{*}=k,\|rk\|\leq\sqrt{1-\lambda^{-2}}\\ r=xx^{*}\left(\mathbb{1}+\left(x-x^{*}\right)^{*}\left(x-x^{*}\right)\right)^{-1},\\ k=(\mathbb{1}-x)(\mathbb{1}-x)^{*}\left(\mathbb{1}+\left(x^{*}-x\right)^{*}\left(x^{*}-x\right)\right)^{-1},\\ x=\left(\mathbb{1}-f_{\lambda}\left(rk^{*}kr^{*}\right)\right)^{-1}(r-rk)\end{array}\right.\right\rangle_{\UCS}\\[30pt]

&	\cong_{\UCS}	&	\left\langle\begin{array}{c}(x,\lambda),\\ (r,1),(k,1)\end{array}\left|\begin{array}{c}x=x^{2},r^{2}=r^{*}=r,k^{2}=k^{*}=k,\|rk\|\leq\sqrt{1-\lambda^{-2}}\\  k=(\mathbb{1}-x)(\mathbb{1}-x)^{*}\left(\mathbb{1}+\left(x^{*}-x\right)^{*}\left(x^{*}-x\right)\right)^{-1},\\ x=\left(\mathbb{1}-f_{\lambda}\left(rk^{*}kr^{*}\right)\right)^{-1}(r-rk)\end{array}\right.\right\rangle_{\UCS}\\[25pt]

&	\cong_{\UCS}	&	\left\langle\begin{array}{c}(x,\lambda),\\ (r,1),(k,1)\end{array}\left|\begin{array}{c}x=x^{2},r^{2}=r^{*}=r,k^{2}=k^{*}=k,\|rk\|\leq\sqrt{1-\lambda^{-2}}\\ x=\left(\mathbb{1}-f_{\lambda}\left(rk^{*}kr^{*}\right)\right)^{-1}(r-rk)\end{array}\right.\right\rangle_{\UCS}\\[15pt]

&	\cong_{\UCS}	&	\left\langle\begin{array}{c}(x,\lambda),\\ (r,1),(k,1)\end{array}\left|\begin{array}{c}r^{2}=r^{*}=r,k^{2}=k^{*}=k,\|rk\|\leq\sqrt{1-\lambda^{-2}}\\ x=\left(\mathbb{1}-f_{\lambda}\left(rk^{*}kr^{*}\right)\right)^{-1}(r-rk)\end{array}\right.\right\rangle_{\UCS}\\[15pt]

&	\cong_{\UCS}	&	\left\langle(r,1),(k,1)\left|r^{2}=r^{*}=r,k^{2}=k^{*}=k,\|rk\|\leq\sqrt{1-\lambda^{-2}}\right.\right\rangle_{\UCS}\\
\end{array}\]
Now that $\alg{A}$ has been written as a C*-algebra of two projections, the result of \cite[Theorem 3.2]{pedersen2} gives
\[\begin{array}{rcl}
\alg{A}	&	\cong_{\UCS}	&	\begin{bmatrix}C(X)	&	C_{0}\left(X\setminus\{0,1\}\right)\\ C_{0}\left(X\setminus\{0,1\}\right)	&	C(X)\end{bmatrix},
\end{array}\]
where $X:=\sigma_{\alg{A}}(rkr)$.  Use of the universal property shows that
\[
X=\left[0,1-\lambda^{-2}\right].
\]
In summary,
\[\begin{array}{rcl}
\alg{A}	&	\cong_{\UCS}	&	\left\{\begin{array}{cc}
\mathbb{C},	&	\lambda<1,\\[8pt]
\mathbb{C}^{2},	&	\lambda=1,\\[8pt]
\begin{bmatrix}C[0,1]	&	C_{0}(0,1]\\[8pt] C_{0}(0,1]	&	C[0,1]\end{bmatrix},	&	\lambda>1.\\
\end{array}\right.\\
\end{array}\]
For the non-unital case, Theorem \ref{unit-present} gives the following characterization by recognizing the ideal generated by $x$ in $\alg{A}$.
\[\begin{array}{rcl}
\left\langle(x,\lambda)\left|x=x^{2}\right.\right\rangle_{\CS}	&	\cong_{\CS}	&	\left\{\begin{array}{cc}
\mathbb{O},	&	\lambda<1,\\[8pt]
\mathbb{C},	&	\lambda=1,\\[8pt]
\begin{bmatrix}C[0,1]	&	C_{0}(0,1]\\[8pt] C_{0}(0,1]	&	C_{0}(0,1]\end{bmatrix},	&	\lambda>1.\\
\end{array}\right.\\
\end{array}\]
\end{ex}

\begin{bibdiv}
\begin{biblist}

\bib{ara2011}{article}{
      author={Ara, Pere},
      author={Corti{\~n}as, Guillermo},
       title={Tensor products of leavitt path algebras},
        date={201108},
      eprint={1108.0352v1},
         url={http://arxiv.org/abs/1108.0352v1},
}

\bib{baumslag}{book}{
      author={Baumslag, Gilbert},
       title={Topics in combinatorial group theory},
      series={Lectures in Mathematics ETH Z{\"u}rich},
   publisher={Birkh{\"a}user Verlag},
     address={Basel},
        date={1993},
        ISBN={3-7643-2921-1},
      review={\MR{1243634 (94j:20034)}},
}

\bib{blackadar1985}{article}{
      author={Blackadar, Bruce},
       title={Shape theory for {$C^\ast$}-algebras},
        date={1985},
        ISSN={0025-5521},
     journal={Math. Scand.},
      volume={56},
      number={2},
       pages={249\ndash 275},
      review={\MR{MR813640 (87b:46074)}},
}

\bib{davidson}{book}{
      author={Davidson, Kenneth~R.},
       title={{$C^*$}-algebras by example},
      series={Fields Institute Monographs},
   publisher={American Mathematical Society},
     address={Providence, RI},
        date={1996},
      volume={6},
        ISBN={0-8218-0599-1},
      review={\MR{MR1402012 (97i:46095)}},
}

\bib{gerbracht}{thesis}{
      author={Gerbracht, Eberhard H.-A.},
       title={Elemente einer kombinatorischen theorie der c*-algebren:
  Pr{\"a}sentationen von c*-algebren mittels erzeugender und relationen},
        type={Ph.D. Thesis},
        date={1998},
}

\bib{goodearl}{article}{
      author={Goodearl, K.~R.},
      author={Menal, P.},
       title={Free and residually finite-dimensional {$C^*$}-algebras},
        date={1990},
        ISSN={0022-1236},
     journal={J. Funct. Anal.},
      volume={90},
      number={2},
       pages={391\ndash 410},
         url={http://dx.doi.org/10.1016/0022-1236(90)90089-4},
      review={\MR{MR1052340 (91f:46078)}},
}

\bib{grilliette1}{article}{
      author={Grilliette, Will},
       title={Scaled-free objects},
        date={201011},
      eprint={arXiv:1011.0717v2},
         url={http://arxiv.org/abs/1011.0717v2},
}

\bib{hadwin2003}{article}{
      author={Hadwin, Don},
      author={Kaonga, Llolsten},
      author={Mathes, Ben},
       title={Noncommutative continuous functions},
        date={2003},
        ISSN={0304-9914},
     journal={J. Korean Math. Soc.},
      volume={40},
      number={5},
       pages={789\ndash 830},
      review={\MR{MR1996841 (2005g:46132)}},
}

\bib{loring1}{book}{
      author={Loring, Terry~A.},
       title={Lifting solutions to perturbing problems in {$C^*$}-algebras},
      series={Fields Institute Monographs},
   publisher={American Mathematical Society},
     address={Providence, RI},
        date={1997},
      volume={8},
        ISBN={0-8218-0602-5},
      review={\MR{MR1420863 (98a:46090)}},
}

\bib{loring2008}{article}{
      author={Loring, Terry~A.},
       title={{$C^*$}-algebra relations},
        date={2010},
        ISSN={0025-5521},
     journal={Math. Scand.},
      volume={107},
      number={1},
       pages={43\ndash 72},
      review={\MR{2679392}},
}

\bib{pedersen2}{article}{
      author={Pedersen, Gert~Kjaerg{\.a}rd},
       title={Measure theory for {$C^{\ast} $} algebras. {II}},
        date={1968},
        ISSN={0025-5521},
     journal={Math. Scand.},
      volume={22},
       pages={63\ndash 74},
      review={\MR{MR0246138 (39 \#7444)}},
}

\bib{rordam1994}{article}{
      author={R{\o}rdam, Mikael},
       title={A short proof of {E}lliott's theorem:
  {$\mathcal{O}_2\otimes\mathcal{O}_2\cong\mathcal{O}_2$}},
        date={1994},
        ISSN={0706-1994},
     journal={C. R. Math. Rep. Acad. Sci. Canada},
      volume={16},
      number={1},
       pages={31\ndash 36},
      review={\MR{1276341 (95d:46064)}},
}

\bib{tietze}{article}{
      author={Tietze, Heinrich},
       title={\"{U}ber die topologischen {I}nvarianten mehrdimensionaler
  {M}annigfaltigkeiten},
        date={1908},
        ISSN={0026-9255},
     journal={Monatsh. Math. Phys.},
      volume={19},
      number={1},
       pages={1\ndash 118},
         url={http://dx.doi.org/10.1007/BF01736688},
      review={\MR{MR1547755}},
}

\bib{vidav}{article}{
      author={Vidav, Ivan},
       title={On idempotent operators in a {H}ilbert space},
        date={1964},
        ISSN={0350-1302},
     journal={Publ. Inst. Math. (Beograd) (N.S.)},
      volume={4 (18)},
       pages={157\ndash 163},
      review={\MR{MR0171161 (30 \#1392)}},
}

\bib{freerandom}{book}{
      author={Voiculescu, D.~V.},
      author={Dykema, K.~J.},
      author={Nica, A.},
       title={Free random variables},
      series={CRM Monograph Series},
   publisher={American Mathematical Society},
     address={Providence, RI},
        date={1992},
      volume={1},
        ISBN={0-8218-6999-X},
        note={A noncommutative probability approach to free products with
  applications to random matrices, operator algebras and harmonic analysis on
  free groups},
      review={\MR{MR1217253 (94c:46133)}},
}

\end{biblist}
\end{bibdiv}

\end{document}